\newtheorem{theorem}{Theorem}
\numberwithin{theorem}{section}
\newtheorem{lemma}[theorem]{Lemma}
\newtheorem{proposition}[theorem]{Proposition}
\newtheorem{corollary}[theorem]{Corollary}
\theoremstyle{definition}
\newtheorem{definition}{Definition}
\numberwithin{definition}{section}
\numberwithin{equation}{section}
\numberwithin{figure}{section}
\newcommand{\R}{\mathbb R}
\newcommand{\g}{{\mathrm{g}}}
\newcommand{\hf}{{\frac 12}}
\newcommand{\thf}{\textstyle{\frac 12}}
\newcommand{\bigO}{\mathcal{O}}
\newcommand{\bfu}{\mathbf{u}}
\newcommand{\bfy}{\mathbf{y}}
\newcommand{\bfr}{\mathbf{r}}
\newcommand{\one}{{\mathbbm{1}}}
\newcommand{\mL}{{\mathcal L}}
\newcommand{\ts}{\hspace{.06em}}
\DeclareMathOperator{\trace}{tr}
\DeclareMathOperator{\comp}{c}
\DeclareMathOperator{\supp}{supp}
\title [Trace of the heat kernel and regularity of potentials]{On the trace of Schr\"odinger heat kernels and regularity of potentials}
\author[H. Smith]{Hart Smith}
\address{Department of Mathematics, University of Washington, Seattle, WA 98195-4350, USA}
\email{hfsmith@uw.edu}
\thanks{This material is based upon work supported by 
the National Science Foundation under Grant DMS-1500098}
\keywords{Heat trace, resonances}
\subjclass[2010]{58J35 (Primary), 35P25 (Secondary)}
\begin{document}

\begin{abstract}
For the Schr\"odinger operator $-\Delta_\g+V$ on a complete Riemannian manifold with real valued potential $V$ of compact support, we establish a sharp equivalence between Sobolev regularity of $V$ and the existence of finite-order asymptotic expansions as $t\rightarrow 0$ of the relative trace of the Schr\"odinger heat kernel. As an application, we generalize a result of S\`a Barreto and Zworski \cite{SaBZw}, concerning the existence of resonances on compact metric perturbations of Euclidean space, to the case of bounded measurable potentials.
\end{abstract}

\maketitle

\section{Introduction and statement of results}
Consider a Schr\"odinger operator $P_V=-\Delta_\g+V$ on a complete Riemannian manifold
$(M,\g)$ of dimension $n$, with $\Delta_\g$ the Laplace-Beltrami operator. We assume $V\in L^\infty_{\comp}(M)$ is real valued, where $L^\infty_{\comp}(M)$ denotes bounded measurable functions of compact support on $M$. We assume the Ricci curvature of $(M,\g)$ is bounded from below to ensure uniqueness of solutions to the heat equation; see \cite{Dod}.  Let $e^{-tP_0}$ denote the heat semigroup on $(M,\g)$, and $e^{-tP_V}$ the heat semigroup for $P_V$, which can be constructed from $e^{-tP_0}$ by iteration (e.g. see \S\ref{sec:heatkernel} of this paper).

For examples of $(M,\g)$ including compact manifolds \cite{BGM}, and Euclidean space \cite{MM}, \cite{CdV}, it is well known that if $V\in C^\infty_{\comp}(M)$ then $e^{-tP_V}-e^{-tP_0}$ is of trace class for $t>0$, and its trace admits a full asymptotic expansion as $t\rightarrow 0$
$$
\trace\bigl(e^{-tP_V}-e^{-tP_0}\bigr) \sim (4\pi t)^{-\frac n2}\sum_{k=1}^\infty a_k\,t^k,\qquad 0<t\le 1.
$$
In this paper we prove a sharp equivalence between the existence of this expansion to finite order, and finite order Sobolev regularity of $V$. In order to ensure the above difference is of trace class when $n\ge 4$ we make an additional assumption \eqref{assump2} on $(M,\g)$, but for $n\le 3$ we prove that it is trace class using only uniqueness of solutions to the heat equation. Our main result is the following.

\begin{theorem}\label{thm:main} Assume that \eqref{assump2} holds if $n\ge 4$.
Suppose that $V\in L^\infty_{\comp}(M)$ is real valued, and that for a given integer $m\ge 0$ it holds that
\begin{equation}\label{eqn:asymptotics}
\trace\bigl(e^{-tP_V}-e^{-tP_0}\bigr)
=(4\pi t)^{-\frac n2}
\Bigl(c_1 t+c_2 t^2+\cdots+c_{m+1}t^{m+1}+r_{m+2}(t)t^{m+2}\Bigr)
\end{equation}
where $|r_{m+2}(t)|\le C$ for $0<t\le 1$. Then $V\in H^m(M)$. Conversely, if $V\in L^\infty_{\comp}\cap H^m(M)$ then \eqref{eqn:asymptotics} holds with $r_{m+2}(t)\in C\bigl([0,1])$, and in particular $\lim_{t\rightarrow 0^+}r_{m+2}(t)$ exists.
\end{theorem}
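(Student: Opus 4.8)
The overall strategy is to build the heat semigroup $e^{-tP_V}$ from $e^{-tP_0}$ via the Duhamel iteration, take the trace of the difference term by term, and match the resulting small-$t$ expansion against the hypothesized one. Write
$$
e^{-tP_V}-e^{-tP_0}=\sum_{j\ge 1}(-1)^j\,E_j(t),\qquad
E_j(t)=\int_{\Delta_j(t)}e^{-s_0 P_0}\,V\,e^{-s_1 P_0}\,V\cdots V\,e^{-s_j P_0}\,ds,
$$
where $\Delta_j(t)$ is the simplex $\{s_i\ge 0:\ s_0+\cdots+s_j=t\}$. The key analytic input is the on-diagonal estimate for the free heat kernel together with the local parametrix comparing $e^{-tP_0}$ on $\supp V$ with the Euclidean heat kernel; this is where the curvature bound and assumption \eqref{assump2} enter to guarantee trace-class membership and control of the remainder. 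Taking traces, one uses the cyclicity of the trace and the semigroup property to collapse each $E_j(t)$ to an integral over a lower-dimensional simplex of $\trace\bigl(V e^{-\sigma_1 P_0}V\cdots V e^{-\sigma_j P_0}\bigr)$, and then the on-diagonal short-time asymptotics of the heat kernel produce an expansion in powers of $t$ whose coefficients are integrals of products of $V$ against derivatives of the metric.

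For the converse direction (regularity implies expansion), the plan is: if $V\in H^m$, differentiate and integrate by parts in the simplex integrals so that derivatives land on $V$, and show each term $E_j(t)$ contributes $(4\pi t)^{-n/2}$ times a polynomial in $t$ of degree $\ge j$ up to order $m+1$, with a remainder that is continuous on $[0,1]$. The contributions with $j\le m+1$ are the ones producing the coefficients $c_1,\dots,c_{m+1}$; the terms with $j\ge m+2$, and the tails of the lower-order terms, are absorbed into $r_{m+2}(t)t^{m+2}$, using that each extra factor of $V$ contributes an extra power of $t$ after the simplex integration and that $m$ derivatives of $V$ in $L^2$ suffice by Sobolev embedding and Cauchy–Schwarz to bound the relevant integrals. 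One must track that exactly $m$ derivatives of $V$ appear in the coefficient of $t^{m+1}$ and no more, which is what makes the regularity threshold sharp; this is a bookkeeping argument on how derivatives distribute across the $j$ factors of $V$ in $E_j(t)$.

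For the forward direction (expansion implies regularity) — which I expect to be the main obstacle — the idea is to extract, from the hypothesis that the expansion holds to order $m+1$ with bounded remainder, an a priori bound on a Sobolev-type norm of $V$. The natural route is to isolate the $j=2$ term $E_2(t)$, whose trace is $\int_0^t\int \trace\bigl(V e^{-\sigma P_0}V e^{-(t-\sigma)P_0}\bigr)\,\cdots$, essentially $\int \widehat{\,|V|^2\,}$-type quantities smeared against the heat kernel; after subtracting the known lower terms, the surviving $t^{m+2}$-order behavior of this contribution controls $\int |\xi|^{2m}\widehat{V}(\xi)$-type integrals in the Euclidean model, i.e. $\|V\|_{H^m}$. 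The difficulty is that the higher iterates $E_j(t)$ with $j\ge 3$ also feed into the same powers of $t$ and could in principle conspire to cancel the $E_2$ contribution; the resolution is to argue that the $E_j$ for $j\ge 3$ are controlled a priori in terms of lower Sobolev norms of $V$ (by an induction on $m$: from the order-$(m-1)$ expansion we already know $V\in H^{m-1}$, which bounds the $j\ge 3$ iterates to the needed order), so that the leading obstruction to $V\in H^m$ lives entirely in $E_2(t)$ and hence cannot cancel. Making this separation quantitative, and handling the passage from the manifold to the Euclidean model via the parametrix error terms, is the technical heart of the proof.
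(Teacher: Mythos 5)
Your overall plan tracks the paper's argument closely: Duhamel iteration for $e^{-tP_V}$, cyclicity of the trace to collapse each iterate to a simplex integral, reduction to a compact manifold by compact support of $V$, and --- most importantly --- the induction on $m$ that uses the $H^{m-1}$ regularity already established to bound the $W_k$ terms with $k\ge 3$ so that the $H^m$ information can be read off from $W_2$ alone. That inductive decoupling is exactly how the paper avoids the cancellation issue you correctly worry about, so the architecture of your sketch is right.

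Where your sketch is not yet a proof is the step you yourself flag as the technical heart: extracting $\|V\|_{H^m}<\infty$ from the assumed bound on the $W_2$ remainder. The paper does this with two specific devices you don't articulate. First, the product identity \eqref{kernelprod}, $H_0(vt,y,z)H_0((1-v)t,y,z)=(4\pi t)^{-n/2}\bigl(H_0(v(1-v)t,y,z)+R\bigr)$, converts the $W_2$ trace into an integral of a \emph{single} heat kernel against $V(y)V(z)$ plus a remainder $R$; the essential point (Proposition~\ref{prop:traceR}) is that the $R$-contribution admits an expansion to order $m$ using only $V\in H^{m-1}$, i.e.\ the regularity already available at that stage of the induction, so $R$ cannot conceal the obstruction. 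Second, once reduced to $\int_0^1\int H_0(v(1-v)t,y,z)V(y)V(z)$, the paper expands $V$ in eigenfunctions of $-\Delta_\g$ on the compact manifold, making the quantity the manifestly nonnegative spectral sum $\int_0^1\sum_j e^{-v(1-v)t\rho_j}|b_j|^2\,dv$; boundedness of the $t^m$-remainder together with Fatou's lemma then forces $\sum_j\rho_j^m|b_j|^2<\infty$, which is $V\in H^m$. Your proposal to pass to ``the Euclidean model via parametrix error terms'' and control $\int|\xi|^{2m}\widehat V$-type integrals would need analogues of both: a parametrix remainder controllable by $H^{m-1}$ alone, and a \emph{positivity} mechanism to upgrade a one-sided bound on a $t$-expansion remainder into finiteness of a Sobolev norm. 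The positivity step is not optional: $r_{m+2}(t)$ is only assumed bounded, not convergent, and without a Fatou-type argument on a nonnegative integrand, boundedness of the expansion could in principle coexist with $\|V\|_{H^m}=\infty$ through cancellation. Identifying a quantity that is simultaneously (i) determined by the hypothesized expansion up to lower-order-controlled errors and (ii) a monotone proxy for $\|V\|_{H^m}^2$ is the missing ingredient in your forward direction.

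Two smaller points: for the converse direction the paper's distribution of derivatives across the $k$ factors in $W_k$ uses Gagliardo--Nirenberg interpolation (rather than Sobolev embedding plus Cauchy--Schwarz as you suggest), which is what makes the estimate close with only $m$ derivatives split between two of the $k$ factors; and the trace-class reduction to compact $M$ requires Lemma~\ref{diagest}, which uses uniqueness of the heat kernel under the Ricci lower bound --- your ``on-diagonal estimate'' phrasing glosses over this.
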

Here, $H^m(M)$ with $m\ge 0$ denotes the integer order Sobolev spaces on $M$, consisting of functions whose derivatives up to order $m$ belong to $L^2(M)$. We consider only functions supported in a fixed compact set, so the norm on $H^m(M)$ can be defined using a finite collection of coordinate charts.

For $n\ge 4$, to show that $e^{-tP_V}-e^{-tP_0}$ is trace class we will assume trace class bounds for the heat kernel restricted on one side to a compact set. Let $\mL^1$ denote the trace class operators on $L^2(M)$, which form an ideal in the algebra of bounded operators. If $\one_K$ denotes restriction of functions to $K$, then for $K\subset M$ compact we assume that
\begin{equation}\label{assump2}
\|\one_K\circ e^{-tP_0}\|_{\mL^1}\le C_K\,t^{-\frac n2},\qquad 0<t\le 1.
\end{equation}
This holds, for example, if the sectional curvatures of $(M,\g)$ are globally bounded above and below, and the injection radius is globally bounded below; see Lemma \ref{lem:traceclass} below.

If $M$ is compact, then $e^{-tP_0}$ is itself of trace class, and by the theorem of Minakshisundaram-Pleijel \cite{MP}, its trace admits a full asymptotic expansion as $t\rightarrow 0$, with trace coefficients expressed in terms of geometric invariants. For modern treatments of this result, see \cite{chow} and \cite{rosenberg}. Thus, for $M$ compact Theorem \ref{thm:main} states that $\trace(e^{-tP_V})$ admits an expansion
$$
\trace\bigl(e^{-tP_V}\bigr)=(4\pi t)^{-\frac n2}
\Bigl(c_0+c_1 t+c_2 t^2+\cdots+c_{m+1}t^{m+1}+\bigO\bigl(t^{m+2}\bigr)\Bigr),\quad 0< t\le 1,
$$
precisely when $V\in H^m(\R^n)$. Throughout this paper we are interested only in the trace near $t=0$, and henceforth in all statements we restrict to $t\in(0,1]$.

Theorem \ref{thm:main} is closely related to a priori estimates that give bounds on the Sobolev norms of a real, smooth potential $V$ in terms of the coefficients $c_k$. These bounds have been used to establish compactness in the $C^\infty$ topology of isospectral families of smooth potentials on a compact Riemannian manifold, with some a priori bound assumed on $V$ for dimensions $n\ge 4$. See for example 
\cite{MM},  \cite{Br}, and \cite{Don}. The novelty of Theorem \ref{thm:main} is to establish the regularity result analogous to these a priori bounds, for all finite orders of regularity. This requires in particular a careful analysis of the remainder terms in the heat trace expansion, for $t$ in an interval and $V$ of finite regularity, and not just of the coefficients $c_k$.

As an application of Theorem \ref{thm:main} we prove here the following result on existence of resonances for compact metric perturbations of the Laplacian. We remark that there exist complex valued $V$ with no resonances by \cite{TC}, and that even when $V\in C_{\comp}^\infty$ the result is known only in dimension three.
\begin{theorem}
Suppose that $M=\R^3$, and that $\g^{ij}(x)=\delta^{ij}$ on the complement of some compact set. Suppose also that $V\in L^\infty_{\comp}(\R^3)$ is real valued. Then the operator $P_V=-\Delta_\g+V$ has infinitely many scattering resonances, unless $V=0$ and $(M,\g)$ is isometric to Euclidean space.
\end{theorem}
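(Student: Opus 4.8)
The plan is to use Theorem~\ref{thm:main} to force $V$ to be smooth and then to invoke the smooth case, which is the theorem of S\`a Barreto and Zworski \cite{SaBZw}. Suppose $P_V$ has only finitely many resonances $\lambda_1,\dots,\lambda_N$, listed with multiplicity; I will deduce that $V=0$ and $(M,\g)$ is isometric to Euclidean space.

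First I would extract an explicit formula for $\theta(t):=\trace(e^{-tP_V}-e^{-tP_0})$ from this hypothesis. Since $n=3$ is odd and the perturbation of both metric and potential is compactly supported, $R_V(\lambda):=(P_V-\lambda^2)^{-1}$, acting on functions of compact support, continues meromorphically from $\{\Im\lambda>0\}$ to all of $\C$ with poles exactly at the $\lambda_j$; this step uses only $V\in L^\infty_{\comp}$ and a Fredholm-analytic argument. The Poisson formula for resonances (see \cite{SaBZw} and the references there) then identifies the relative wave trace $u(t)=\trace\bigl(\cos(t\sqrt{P_V})-\cos(t\sqrt{P_0})\bigr)$, as an identity of distributions on $\R$, with a finite linear combination of the functions $e^{\pm i\lambda_j|t|}$. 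Inserting this into the subordination identity $e^{-sP}=(\pi s)^{-1/2}\int_0^\infty e^{-\tau^2/(4s)}\cos(\tau\sqrt P)\,d\tau$ and carrying out the Gaussian integrals gives, for $t>0$,
$$
\theta(t)=\sum_{j}m_j\,e^{-t\lambda_j^2}\,\mathrm{erfc}\bigl(-i\lambda_j\sqrt t\,\bigr)+\kappa ,
$$
with coefficients $m_j>0$ and a constant $\kappa\ge0$ accounting for a possible zero eigenvalue. In particular $\theta$ extends to a continuous function on $[0,1]$, analytic in $\sqrt t$ near $0$, and $\theta(0)=\sum_j m_j+\kappa$ is nonnegative and vanishes precisely when $R_V(\lambda)$ is entire.

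Next I would invoke the converse half of Theorem~\ref{thm:main} with $m=0$: as $V\in L^\infty_{\comp}\cap H^0(\R^3)$, $\theta(t)=(4\pi t)^{-3/2}\bigl(c_1t+r_2(t)t^2\bigr)$ with $r_2\in C([0,1])$, that is $\theta(t)=(4\pi)^{-3/2}c_1 t^{-1/2}+\bigO(t^{3/2})$. Since $\theta$ is bounded on $(0,1]$ this forces $c_1=0$, hence $\theta(t)=\bigO(t^{3/2})$ and $\theta(0)=0$; therefore $R_V(\lambda)$ is entire, $P_V$ has neither resonances nor eigenvalues, $u\equiv0$ by the Poisson formula, and so $\theta\equiv0$. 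But $\theta\equiv0$ satisfies \eqref{eqn:asymptotics} trivially for every $m\ge0$, so Theorem~\ref{thm:main} gives $V\in H^m(\R^3)$ for all $m$, hence $V\in C^\infty_{\comp}(\R^3)$. With $V$ smooth the relative heat trace admits its full classical expansion $(4\pi t)^{-3/2}\sum_{k\ge1}a_k t^k$, and $\theta\equiv0$ forces every $a_k$ to vanish; in dimension three the second relative heat invariant is
$$
a_2=\tfrac1{360}\int\bigl(3R^2+6|\mathrm{Ric}|^2\bigr)\,dV_\g+\tfrac12\int\bigl(V-\tfrac16 R\bigr)^2\,dV_\g ,
$$
a sum of nonnegative terms, so $a_2=0$ gives $\mathrm{Ric}\equiv0$ — hence $(M,\g)$ is flat, and therefore isometric to Euclidean space — and $V\equiv0$. (This last step is the argument of \cite{SaBZw}.)

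I expect the main obstacle to be the first step. The meromorphic continuation, the Poisson formula, and in particular the trace-class estimates and polynomial resolvent bounds on horizontal strips that underpin the contour deformations, are classically set up for smooth or analytic perturbations; carrying them out for a merely bounded measurable compactly supported $V$ — equivalently, controlling the relative scattering determinant on the continuation, whose zeros are the resonances — is the technical heart. What makes it go through with no regularity of $V$ is exactly the $m=0$ case of the converse in Theorem~\ref{thm:main}, which supplies the a priori bound $\theta(t)=(4\pi)^{-3/2}c_1 t^{-1/2}+\bigO(t^{3/2})$ used to pass from "$\theta$ bounded" to "$\theta(0)=0$"; a minor further point is the bookkeeping of a pole of $R_V(\lambda)$ at $\lambda=0$, whose contribution to $\theta(0)$ is nonnegative and hence harmless for the implication $\theta(0)=0\Rightarrow R_V$ entire.
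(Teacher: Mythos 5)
Your overall strategy is the one the paper uses: combine Theorem \ref{thm:main} with the finite-resonance analysis of the relative heat trace from \cite{SmZw} and, once $V$ is known to be smooth, with the theorem of \cite{SaBZw}. But the execution of the first step contains a genuine error. The Poisson formula identifies the relative wave trace $u(t)$ with $\sum_j e^{i\lambda_j|t|}$ only for $t\neq 0$; as a distribution on all of $\R$, $u$ also has a part supported at $t=0$ (the diagonal singularity of the wave kernel, which does not cancel in the relative trace). Under subordination that singular part produces exactly the $c_0\,t^{-1/2}$ term of the heat expansion. Your formula $\theta(t)=\sum_j m_j e^{-t\lambda_j^2}\mathrm{erfc}(-i\lambda_j\sqrt t)+\kappa$ omits it, and the two conclusions you draw from it are false: $\theta$ is \emph{not} bounded on $(0,1]$ in general (so you cannot conclude $c_1=0$ from boundedness), and in the no-resonance case $u$ is \emph{not} identically zero and $\theta$ is not identically zero --- the paper's argument gives $\theta(t)=c_0\,t^{-1/2}$ with $c_0$ a priori nonzero. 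A second, smaller gap is the unjustified assertion that the resonance contributions $m_j$ to the constant term are positive; the sign bookkeeping of the constant term is precisely the content of \cite[\S 2.3]{SmZw} and is not a formality.

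The correct comparison is between \emph{constant terms}, not boundedness: Theorem \ref{thm:main} with $m=0$ (valid since $V\in L^\infty_{\comp}\subset L^2$) gives $\theta(t)=c_0\,t^{-1/2}+\bigO(t^{1/2})$, so $\lim_{t\to 0^+}\bigl(\theta(t)-c_0\,t^{-1/2}\bigr)=0$; whereas the argument of \cite[\S 2.3]{SmZw} shows that if there are finitely many but at least one resonance this limit is nonzero. That contradiction yields infinitely many resonances once a single resonance exists. In the remaining case of no resonances, the scattering determinant is entire and the wave trace is supported at $t=0$, so $\theta(t)=c_0\,t^{-1/2}$ exactly; then \eqref{eqn:asymptotics} holds for every $m$ with all higher coefficients zero, Theorem \ref{thm:main} gives $V\in H^m$ for all $m$, hence $V\in C^\infty_{\comp}(\R^3)$, and one invokes \cite{SaBZw} wholesale (your positivity argument for the second heat invariant is essentially their conclusion, but it needs $\theta(t)=c_0\,t^{-1/2}$, not $\theta\equiv 0$, as input).
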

\begin{proof}
This was proved in \cite{SaBZw} in the case $V\in C_{\comp}^\infty(\R^3)$, and in \cite{SmZw} for $V\in L^\infty_{\comp}(\R^3)$ in the case $\g^{ij}(x)=\delta^{ij}$ on all of $\R^3$. We follow here the proof in \cite{SmZw}, with the addition of a result from \cite{SaBZw}. To start, assume that there are no resonances. Then the argument in \cite[\S 2.3]{SmZw} shows that, since the scattering matrix is an entire function, the left hand side of \eqref{eqn:asymptotics} admits an asymptotic expansion with only terms of negative half-integral order. In particular, on $\R^3$ we have
$\trace\bigl(e^{-tP_V}-e^{-tP_0}\bigr)=c_0\,t^{-\frac 12}.$
By Theorem \ref{thm:main}, this implies that $V\in C_{\comp}^\infty(\R^3)$. We may then apply the Theorem of \cite{SaBZw} to see that $V=0$, and $(\R^3,\g)$ is isometric to Euclidean space.

We thus assume there is at least one resonance. Note, by Theorem \ref{thm:main} with $m=0$, that for some $c_0$
$$
\trace\bigl(e^{-tP_V}-e^{-tP_0}\bigr)=c_0 \, t^{-\frac 12}+\bigO(t^{\frac 12}).
$$
If there were only finitely many resonances, then the argument of \cite[\S 2.3]{SmZw} shows that
$$
\lim_{t\rightarrow 0^+}\Bigl(\trace\bigl(e^{-tP_V}-e^{-tP_0}\bigr)-c_0\, t^{-\frac 12}\Bigr)\ne 0,
$$
and hence there must in fact be infinitely many resonances.
\end{proof}

We conclude this section with three results concerning the heat kernel $e^{-tP_0}$ that will be used to obtain trace class bounds on $e^{-tP_V}-e^{-tP_0}$. A corollary of Lemma \ref{lem:traceclass} is that
\eqref{assump2} holds if the sectional curvatures are globally bounded above and below and there is a global lower bound on the injectivity radius. The first condition of the lemma holds in that case by \cite{cheng}, and the second by Bishop's volume comparison theorem \cite{Bishop}.

\begin{lemma}\label{lem:traceclass} 
Condition \eqref{assump2} holds if there is a constant $C$, and $x_0\in M$, such that when $t\in (0,1]$ and $R>0$,
$$
H_0(t,x,y)\le C\,t^{-\frac n2}e^{-\frac{d(x,y)^2}{Ct}}\,,\qquad
\mu(\{x:d(x,x_0)<2R\})\le C e^{CR^2},
$$
where $\mu$ is the Riemannian volume form for $\g$.
\end{lemma}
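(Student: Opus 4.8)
The plan is to write $\one_K\circ e^{-tP_0}$ as a localized piece, handled directly, plus a far-field tail coming from heat flow into $K$ from large distances, which will be exponentially small in $1/t$; and to reduce the whole estimate to the range $0<t\le t_0$ for a small constant $t_0$ depending only on $C$. For the localized piece, given a precompact measurable set $U$ I would factor $\one_K e^{-tP_0}\one_U=(\one_K e^{-tP_0/2})(e^{-tP_0/2}\one_U)$. By symmetry of $H_0$ and the Chapman--Kolmogorov identity, $\int_M H_0(t/2,x,z)^2\,d\mu(z)=H_0(t,x,x)\le C t^{-n/2}$, so both factors are Hilbert--Schmidt with $\|\one_K e^{-tP_0/2}\|_{\mL^2}^2\le C\mu(K)\,t^{-n/2}$ and $\|e^{-tP_0/2}\one_U\|_{\mL^2}^2\le C\mu(U)\,t^{-n/2}$; hence $\one_K e^{-tP_0}\one_U\in\mL^1$ with norm at most $C(\mu(K)\mu(U))^{1/2}t^{-n/2}$. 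Taking $U=K'$, the closed $1$-neighborhood $\{x:d(x,K)\le 1\}$ of $K$ (compact by completeness and Hopf--Rinow), this controls $\one_K e^{-tP_0}\one_{K'}$ using only the on-diagonal bound, not the volume hypothesis. For the reduction: if the asserted bound holds for $0<t\le t_0$, then for $t\in[t_0,1]$ the semigroup law and $\|e^{-sP_0}\|\le 1$ give $\|\one_K e^{-tP_0}\|_{\mL^1}=\|\one_K e^{-t_0P_0}e^{-(t-t_0)P_0}\|_{\mL^1}\le\|\one_K e^{-t_0P_0}\|_{\mL^1}\le C_K t_0^{-n/2}\le C_K t_0^{-n/2}t^{-n/2}$, since $t^{-n/2}\ge 1$; so the freedom to shrink $t_0$ is exactly what lets the volume hypothesis, which controls $\mu$ only against a Gaussian of fixed width, do its job.

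The core is the tail $E_t=\one_K e^{-tP_0}(1-\one_{K'})$ for $0<t\le t_0$. I would decompose $1-\one_{K'}=\sum_{j\ge1}\one_{A_j}$ with $A_j=\{x:j<d(x,K)\le j+1\}$, and within each term insert $1=\one_{B_j}+(1-\one_{B_j})$, $B_j=\{x:d(x,K)<j/2\}$, in the middle of the factorization through $e^{-tP_0/2}$:
$$\one_K e^{-tP_0}\one_{A_j}=(\one_K e^{-tP_0/2})\bigl(\one_{B_j}e^{-tP_0/2}\one_{A_j}\bigr)+\bigl(\one_K e^{-tP_0/2}(1-\one_{B_j})\bigr)\bigl(e^{-tP_0/2}\one_{A_j}\bigr).$$
In each summand one factor is Hilbert--Schmidt with the bound from the first paragraph, while the other connects two sets at distance $\ge j/2$. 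For such a factor, say $\one_{B_j}e^{-tP_0/2}\one_{A_j}$, the Gaussian bound gives $\|\cdot\|_{\mL^2}^2\le C t^{-n}\int_{B_j}\!\int_{A_j}e^{-4d(x,y)^2/(Ct)}\,d\mu(y)\,d\mu(x)$; splitting off a factor $e^{-c_0 j^2/t}$ from the Gaussian on the support and bounding the residual $\int_M e^{-2 d(x,y)^2/(Ct)}\,d\mu(y)$, for $x$ in a fixed compact set, by summing over metric annuli and invoking $\mu(\{d(\cdot,x_0)<2R\})\le Ce^{CR^2}$, the Gaussian dominates the volume growth once $t\le t_0$, so this integral is at most $C_K$. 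Carrying this through yields $\|\one_K e^{-tP_0}\one_{A_j}\|_{\mL^1}\le C_K\,t^{-3n/4}e^{-c_0 j^2/t}$, and since $\sum_{j\ge1}t^{-3n/4}e^{-c_0j^2/t}=\bigO(t^N)$ for every $N$ as $t\to 0^+$, we obtain $\|E_t\|_{\mL^1}\le C_K\,t^{-n/2}$. Combining with the localized piece and the small-$t$ reduction gives \eqref{assump2}.

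The hard part will be the tail, and in particular securing \emph{trace-class} --- not merely Hilbert--Schmidt --- membership of the far-field contribution: because $e^{-tP_0}$ is not Hilbert--Schmidt on a noncompact manifold, one cannot simply absorb it into a single Hilbert--Schmidt factor, and the role of the annular decomposition together with the auxiliary cutoff $\one_{B_j}$ is precisely to display each piece as a genuine product of two Hilbert--Schmidt operators, at the price of a $j$-sum that converges only because the hypothesized volume growth $e^{CR^2}$ is subordinate to the heat-kernel Gaussian for $t$ small. The size of the final bound is not delicate; the exponential gains $e^{-c_0 j^2/t}$ leave ample room.
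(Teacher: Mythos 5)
Your proof is correct, and it shares the paper's overall strategy---factor through $e^{-tP_0/2}$, exhibit $\one_K e^{-tP_0}$ as a product of two Hilbert--Schmidt operators, use that the heat kernel's Gaussian decay dominates the $e^{CR^2}$ volume growth once $t\le t_0$, and dispose of $t\in[t_0,1]$ by the semigroup property exactly as you do---but you implement the key step by a genuinely different decomposition. You discretize: the complement of a neighborhood of $K$ is cut into annuli $A_j$, and the auxiliary cutoff $\one_{B_j}$ is inserted so that each annular piece is a product of two Hilbert--Schmidt factors, one of which gains $e^{-c_0 j^2/t}$ from the spatial separation; the volume growth is then beaten scale by scale and the $j$-sum converges. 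The paper instead inserts the continuous weight $e^{w}e^{-w}=1$ with $w(x)=C\,d(x,x_0)^2$, writing $\one_K e^{-tP_0}=\bigl(\one_K e^{-tP_0/2}e^{w}\bigr)\bigl(e^{-w}e^{-tP_0/2}\bigr)$: the factor $e^{-w}e^{-tP_0/2}$ is globally Hilbert--Schmidt in a single integral because $\int e^{-2Cd(x,x_0)^2}\,d\mu<\infty$ by the volume hypothesis, and the compensating $e^{w}$ is absorbed by the kernel's Gaussian for $x\in K$ and $t<\tfrac12 C^{-2}$ via the triangle inequality. The weight trick handles all scales at once, with no sum over $j$ and no two-term splitting per annulus; your version makes the ``Gaussian beats $e^{CR^2}$'' mechanism explicit at each scale, at the cost of bookkeeping. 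One small imprecision to fix: in $\one_{B_j}e^{-tP_0/2}\one_{A_j}$ the variable $x$ ranges over $B_j$, which grows with $j$ rather than lying in a fixed compact set, so the residual integral $\int_M e^{-2d(x,y)^2/(Ct)}\,d\mu(y)$ and the outer integration over $B_j$ each contribute an extra factor of size $e^{Cj^2}$ (independent of $t$); this is harmless, since it is absorbed by the split-off $e^{-c_0 j^2/t}$ once $t\le t_0$, which is exactly the subordination you invoke, but it should be recorded as such rather than as a uniform $C_K$ bound.
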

\begin{proof}
Let $w(x)=Cd(x,x_0)^2$, and write 
$$
\one_K e^{-tP_0}=\Bigl(\one_K e^{-\hf tP_0} e^{w}\Bigr)\Bigl(e^{-w}e^{-\hf tP_0}\Bigr)
$$
The second factor has Hilbert-Schmidt norm given by the square root of
$$
\int e^{-2w(x)}H_0(\tfrac 12 t,x,y)^2\,d\mu(y)\,d\mu(x)=\int e^{-2w(x)}H_0(t,x,x)\,d\mu(x).
$$
This in turn is bounded by
$$
C t^{-\frac n2}\int e^{-2Cd(x,x_0)^2}\,d\mu(x)\le Ct^{-\frac n2},
$$
where the last inequality follows easily from the bound on $\mu(B(x_0,R))$.
The first factor has Hilbert-Schmidt norm equal to the square root of
$$
\int\one_K(x)\,H_0(\tfrac 12 t,x,y)^2 e^{2Cd(y,x_0)^2}\,d\mu(y)\,d\mu(x).
$$
For $t<\frac 12C^{-2}$, we use the triangle inequality to dominate this by
\begin{multline*}
Ct^{-\frac n2}\int\one_K(x)\,e^{-8Cd(x,y)^2} e^{2Cd(y,x_0)^2}\,d\mu(y)\,d\mu(x)
\\
\le
Ct^{-\frac n2}
\biggl(\int\one_K(x)\,e^{8Cd(x,x_0)^2}\,d\mu(x)\biggr)
\biggl(\int e^{-2Cd(y,x_0)^2}\,d\mu(y)\biggr)\\
\le C_Kt^{-\frac n2},
\end{multline*}
and together these imply \eqref{assump2} for sufficiently small $t$. For $\frac 12 C^{-2}\le t\le 1$, \eqref{assump2} follows by the group property of the heat operator since $\mL^1$ is an ideal.
\end{proof}

In dimension $n\le 3$, the following will suffice to obtain the needed trace-norm estimates. Here, $\|\cdot\|_{\mL^2}$ is the Hilbert-Schmidt norm on operators.

\begin{lemma}\label{lem:hilbertschmidt}
Assume that $(M,\g)$ is complete, with global lower bounds on the Ricci curvature.
If $K$ is compact, then $\|\one_K\circ e^{-tP_0}\|_{\mL^2}\le C_K\,t^{-\frac n4}$.
\end{lemma}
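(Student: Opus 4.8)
The plan is to reduce the Hilbert--Schmidt bound to an on-diagonal pointwise bound for the heat kernel $H_0(t,x,y)$ of $e^{-tP_0}$. By the semigroup property together with the symmetry $H_0(t,x,y)=H_0(t,y,x)$,
$$
\|\one_K\circ e^{-tP_0}\|_{\mL^2}^2
=\int_K\!\!\int_M H_0(t,x,y)^2\,d\mu(y)\,d\mu(x)
=\int_K H_0(2t,x,x)\,d\mu(x),
$$
so it suffices to prove the on-diagonal bound $H_0(2t,x,x)\le C_K\,t^{-n/2}$ for $x\in K$ and $0<t\le 1$; integrating over the compact set $K$ and taking the square root then gives the lemma, and simultaneously shows the operator is Hilbert--Schmidt.

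For the on-diagonal bound I would invoke the Li--Yau parabolic estimate, valid on any complete manifold whose Ricci curvature is bounded below, in the form $H_0(t,x,x)\le C\,e^{Ct}/\mu(B(x,\sqrt t))$ with $C$ depending only on $n$ and the lower Ricci bound. (Equivalently one may quote the mean-value inequality for nonnegative subsolutions of the heat equation, which holds in this setting because a lower Ricci bound yields local volume doubling and a scale-invariant Poincar\'e inequality via Bishop--Gromov and Buser.) For $0<t\le 1$ the factor $e^{Ct}$ is bounded by a constant, so what remains is a lower bound $\mu(B(x,\sqrt t))\ge c_K\,t^{n/2}$, uniform for $x\in K$ and $0<t\le 1$.

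This last bound is elementary and uses only smoothness and compactness, not the curvature hypothesis: the injectivity radius is a continuous positive function, hence bounded below by some $r_0>0$ on the compact set $K$, and after covering $K$ by finitely many charts in which $\g$ is comparable to the Euclidean metric one obtains $\mu(B(x,r))\ge c_K r^n$ for all $x\in K$ and $0<r\le r_0$; the range $r_0\le r\le 1$ is trivial since $\mu(B(x,r))\ge\mu(B(x,r_0))\ge c_K r_0^n\ge c_K r_0^n\, r^n$. Taking $r=\sqrt t$ and combining with the previous step yields $H_0(2t,x,x)\le C_K\,t^{-n/2}$ on $K$, hence $\|\one_K\circ e^{-tP_0}\|_{\mL^2}^2\le \mu(K)\,C_K\,(2t)^{-n/2}$, which is the assertion.

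The only genuinely non-trivial input is the on-diagonal heat-kernel upper bound under a \emph{one-sided} curvature hypothesis; this is where the assumption on the Ricci curvature does its work, and it is the step I would expect to be the crux. Everything else --- the Hilbert--Schmidt identity and the volume lower bound for small balls centered in a fixed compact set --- is routine. As an alternative to citing Li--Yau directly, one could quote Grigor'yan's characterization of on-diagonal heat-kernel upper bounds in terms of a Faber--Krahn (Nash) inequality and verify the latter from Bishop--Gromov volume comparison, but the Li--Yau estimate is the most direct route.
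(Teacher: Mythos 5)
Your proof is correct, and it shares the paper's opening reduction --- both arguments use the semigroup property and symmetry to write $\|\one_K\circ e^{-tP_0}\|_{\mL^2}^2=\int_K H_0(2t,x,x)\,d\mu(x)$, so that everything hinges on an on-diagonal bound $\sup_{x\in K}H_0(t,x,x)\le C_K\,t^{-n/2}$. Where you diverge from the paper is in how that on-diagonal bound is obtained. The paper proves it by a comparison argument (Lemma~\ref{diagest}): it isometrically embeds a neighborhood of $K$ into a compact manifold $(\tilde M,\tilde\g)$, observes that the cut-off compact heat kernel $\chi(x)\tilde H_0(t,x,y)$ is an $\bigO(t^\infty)$-approximate solution of the heat equation on $M$ (the cutoff errors live away from the diagonal, where the local parametrix expansion shows $\tilde H_0$ is $\bigO(t^\infty)$), and then uses Duhamel together with $\|e^{-sP_0}\|_{L^\infty\to L^\infty}\le 1$ to conclude $H_0$ agrees with $\chi\tilde H_0$ up to $\bigO(t^\infty)$ on $M\times K$. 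In that route the lower Ricci bound enters only through Dodziuk's uniqueness theorem, which legitimizes the Duhamel step; once uniqueness is in hand the on-diagonal bound is inherited from the compact case with essentially no further analysis. You instead invoke the Li--Yau on-diagonal upper bound $H_0(t,x,x)\le Ce^{Ct}/\mu(B(x,\sqrt t))$, using the Ricci hypothesis much more substantively, and then close the argument with the elementary small-ball volume lower bound $\mu(B(x,\sqrt t))\ge c_K t^{n/2}$ valid for $x$ in the fixed compact $K$. Both routes are sound. The paper's is essentially self-contained given that the local compact-manifold parametrix is already being used throughout, and it isolates the role of the curvature assumption to the single issue of uniqueness; yours is shorter if one is content to quote Li--Yau (or the equivalent Faber--Krahn/Nash characterization) as a black box, at the cost of importing a deeper theorem than the lemma strictly requires.
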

\begin{proof}
We calculate the Hilbert-Schmidt norm of the kernel $H_0$ of $e^{-tP_0}$ with one variable restricted to $K$. Since $H_0>0$,
$$
\|\one_K\circ e^{-tP_0}\|_{\mL^2}^2=\int\one_K(x)H_0(t,x,y)^2\,d\mu(y)\,d\mu(x)=
\int_K H_0(2t,x,x)\,d\mu(x).
$$
This bounded by $\text{vol}(K)\,\sup_{x\in K}H_0(2t,x,x)$, and the result is a consequence of the following estimate, valid for compact subsets $K\subset M$,
\begin{equation*}
\sup_{x\in K}H_0(t,x,x)\le C_K\,t^{-\frac n2}.
\end{equation*}
This is known to hold if $M$ is compact, hence for $M$ as in the statement, by the following lemma.
\end{proof}

\begin{lemma}\label{diagest}
Suppose that $(\tilde M,\tilde\g)$ is a compact Riemannian manifold that isometrically contains a neighborhood $(U,\g)$ of the compact subset $K\subset M$. 
Let $\tilde H_0$ be the heat kernel on $(\tilde M,\tilde\g)$, and suppose $\chi\in C_{\comp}^\infty(U)$ equals $1$ on a neighborhood of $K$. Then, if $(M,\g)$ is complete with global lower bounds on its Ricci curvature, the following holds
$$
\sup_{x\in M, y\in K}\bigl|H_0(t,x,y)-\chi(x)\tilde H_0(t,x,y)\bigr|\le C_N\,t^N\quad\forall N,\;\;t\in(0,1].
$$
\end{lemma}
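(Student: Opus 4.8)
The idea is to treat $\chi(x)\tilde H_0(t,x,y)$ as a parametrix for $H_0(t,x,y)$ when $y\in K$ and $t$ is small, and to show that the resulting error is driven entirely by the region where $\chi$ transitions from $1$ to $0$; that region is a fixed positive distance from $K$, and there the heat kernel $\tilde H_0$ on the compact manifold is $O(t^N)$ for all $N$, so Duhamel's formula transfers this smallness to the error.

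First I would fix $y\in K$ and set $u(t,x)=H_0(t,x,y)-\chi(x)\tilde H_0(t,x,y)$, with $\chi\tilde H_0$ extended by $0$ off $U$; this is smooth on $(0,\infty)\times M$, and since $\chi(y)=1$ both $H_0(t,\cdot,y)$ and $\chi\tilde H_0(t,\cdot,y)$ tend distributionally to $\delta_y$ as $t\to0^+$, so $u(t,\cdot)\to 0$. Because $\g=\tilde\g$ on a neighborhood of $\supp\chi$ and $\partial_t\tilde H_0=\Delta_{\tilde\g}\tilde H_0$, the product rule for $\Delta_\g$ gives
$$
(\partial_t-\Delta_\g)u=F(t,\cdot,y),\qquad
F(t,z,y)=(\Delta_\g\chi)(z)\,\tilde H_0(t,z,y)+2\,\g\bigl(\nabla\chi,\nabla_z\tilde H_0(t,\cdot,y)\bigr)(z).
$$
The term $F(t,\cdot,y)$ vanishes wherever $\chi$ is constant, hence is supported in a compact subset of $U$ lying at distance $\ge\delta$ from $K$, for some fixed $\delta>0$. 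The classical off-diagonal and gradient bounds for the heat kernel on $(\tilde M,\tilde\g)$, of the form $|\tilde H_0(t,z,y)|+|\nabla_z\tilde H_0(t,z,y)|\le C\,t^{-(n+1)/2}e^{-d(z,y)^2/(Ct)}$, then yield, for each $N$,
$$
\sup_{x\in M,\ y\in K}|F(t,x,y)|\le C_N\,t^N,\qquad 0<t\le1.
$$

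Next I would establish the Duhamel identity: for $0<\tau<t$,
$$
u(t,x)=\bigl(e^{-(t-\tau)P_0}u(\tau,\cdot)\bigr)(x)+\int_\tau^t\bigl(e^{-(t-s)P_0}F(s,\cdot,y)\bigr)(x)\,ds.
$$
This is legitimate because $u(\tau,\cdot)\in L^2(M)$ — indeed $\|H_0(\tau,\cdot,y)\|_{L^2}^2=H_0(2\tau,y,y)<\infty$ and $\chi\tilde H_0(\tau,\cdot,y)$ is compactly supported — so on $[\tau,t]$, $u$ is an $L^2$-valued solution of the inhomogeneous heat equation with $L^2$ forcing $F$, and the identity is the variation-of-constants formula; equivalently it follows from uniqueness of solutions of the heat equation, available under the assumed lower Ricci bound \cite{Dod}. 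Letting $\tau\to 0^+$: the semigroup property gives $e^{-(t-\tau)P_0}H_0(\tau,\cdot,y)=H_0(t,\cdot,y)$, while $\chi\tilde H_0(\tau,\cdot,y)\,d\mu$ converges weakly to $\chi(y)\delta_y=\delta_y$ (since $\tilde H_0$ is an approximate identity on $\tilde M$), and joint continuity of $H_0$ then gives $e^{-(t-\tau)P_0}\bigl(\chi\tilde H_0(\tau,\cdot,y)\bigr)(x)\to H_0(t,x,y)$; hence the first term tends to $0$ and
$$
u(t,x)=\int_0^t\int_M H_0(t-s,x,z)\,F(s,z,y)\,d\mu(z)\,ds,
$$
the integral converging absolutely by the bound on $F$. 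Since $H_0\ge 0$ with $\int_M H_0(\sigma,x,z)\,d\mu(z)\le 1$, this gives $|u(t,x)|\le\int_0^t\|F(s,\cdot,y)\|_{L^\infty}\,ds\le C_N\int_0^t s^N\,ds\le C_N\,t^{N+1}$, uniformly in $x\in M$ and $y\in K$; relabeling $N$ proves the lemma.

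I expect the main obstacle to be the middle step — justifying the Duhamel representation and, above all, the vanishing of the initial-data term as $\tau\to 0^+$. This is where uniqueness of solutions of the heat equation (hence the lower Ricci bound) enters, and it hinges on the slightly delicate point that $H_0(\tau,\cdot,y)$ lies in $L^2(M)$ for $\tau>0$ even though its distributional limit $\delta_y$ does not. The off-diagonal and gradient bounds on $F$ coming from the compact heat kernel are classical, and once the identity for $u$ is in hand the final estimate is immediate from the $L^\infty$-contraction ($\int_M H_0\le 1$) property of $e^{-tP_0}$.
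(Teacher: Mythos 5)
Your proof is correct and follows essentially the same route as the paper's: write the difference as a Duhamel integral against the forcing $(\partial_t-\Delta_\g)(\chi\tilde H_0)$, observe that forcing is $O(t^N)$ for all $N$ because it is supported where $\nabla\chi\neq 0$ (a fixed positive distance from $K$) where the compact heat kernel and its gradient decay to all orders, and close via the $L^\infty$-contraction of $e^{-tP_0}$. The only differences are presentational: you make the forcing term $F$ and its support explicit and invoke Gaussian off-diagonal/gradient bounds directly, whereas the paper cites the local parametrix expansion for the same estimate, and you spell out the $\tau\to 0^+$ justification of the Duhamel identity that the paper compresses into the phrase ``uniqueness of the heat kernel.''
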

\begin{proof}
For $y\in K$, we consider $\chi(x)\tilde H_0(t,x,y)$ as a function of $x\in U\subset M$. Then by the local heat kernel expansion (see e.g.~\cite[(23.64)-(23.65)]{chow})
$$
\sup_{x\in U,y\in K}\bigl|(\partial_t-\Delta_\g)\chi(x)\tilde H_0(t,x,y)\bigr|\le C_N\,t^N\;\;\forall N,\quad\text{if}\quad t\in(0,1].
$$
Uniqueness of the heat kernel on $(M,\g)$ lets us write
\begin{multline*}
H_0(t,x,y)-\chi(x)\tilde H_0(t,x,y)\\=
\int_0^t \int_M H_0(t-s,x,z)(\partial_s-\Delta_\g)\bigl((\chi(z)\tilde H_0(s,z,y)\bigr)\,d\mu(z)\,ds.
\end{multline*}
Since the heat semigroup has norm 1 on $L^\infty(M)$, the right hand side vanishes to infinite order at $t=0$, uniformly over $y\in K$ and $x\in M$, leading to the desired estimate.
\end{proof}

The outline of this paper is as follows. In \S\ref{sec:heatkernel} we express $e^{-tP_V}$ as an iterative expansion, and use this, together with trace bounds for the localized heat kernel, to obtain an expansion for the trace of their difference. A key simplification is Corollary \ref{cor:reduc} where we reduce matters to the case of $M$ compact by compact support of $V$.
In \S\ref{sec:reductions}, we reduce the proof of Theorem \ref{thm:main} to Theorems \ref{thm:traceW2} and \ref{thm:traceWk}, the proofs of which are given in 
\S\ref{sec:traceW2} and \S\ref{sec:traceWk}. A key tool in these sections is the small time expansion for the heat kernel of $\Delta_\g$ near the diagonal, summarized in \S\ref{sec:heatkernel}, and the resulting rule \eqref{kernelprod} for the product of heat kernels.


\section{The Schr\"odinger heat kernel}\label{sec:heatkernel}
The proof of Theorem \ref{thm:main} relies on the following expansion for the Schr\"odinger heat kernel,
$$
e^{-tP_V}=e^{-tP_0}\,+\,\sum_{k=1}^\infty W_k(t),
$$
where
\begin{multline*}
W_k(t)=(-1)^k\int_{0<s_1<\cdots\,<s_k<t}e^{-(t-s_k)P_0}\,V\,e^{-(s_k-s_{k-1})P_0}\,V\,\cdots\\
\times V\,e^{-(s_2-s_1)P_0}\,V\,e^{-s_1P_0}ds_1\cdots ds_k.
\end{multline*}
The sum over $k$ converges for $t>0$ in the operator norm topology on $L^2(M)$, which follows since $\|W_k\|_{L^2\rightarrow L^2}\le t^k\|V\|_{L^\infty}^k/k!$. The latter bound follows since $\|e^{-tP_0}\|_{L^2\rightarrow L^2}\le 1$ for all $t\ge 0$, and since the region of integration over the $s$ variables has measure $t^k/k!$.

We now estimate the trace norm of the operators $W_k(t)$. Let $\mL^2$ denote the Hilbert-Schmidt operators on $L^2(M)$, and $\mL^1$ the trace class operators.
Recall that $\|ST\|_{\mL^1}\le \|S\|_{\mL^2}\|T\|_{\mL^2}$, and $\|ST\|_{\mL^1}\le \|S\|_{\mL^1}\|T\|_{L^2\rightarrow L^2}$.

Consider first the term $W_1(t)$, and $n\le 3$.  By Lemma \ref{lem:hilbertschmidt} with $K=\supp(V)$, we have
$$
\|e^{-(t-s)P_0}Ve^{-sP_0}\|_{\mL^1}\le C_{\supp(V)}^2\,\|V\|_{L^\infty}\,(t-s)^{-\frac n4}s^{-\frac n4}.
$$
For $n\le 3$ we can integrate this bound over $0<s<t$ to obtain
$$
\| W_1(t)\|_{\mL^1}\le C_{\supp(V)}^2\,\|V\|_{L^\infty}\,t^{1-\frac n2},\qquad n\le 3.
$$
Next consider $k\ge 2$, for $n\le 3$. If $s_j-s_{j-1}>(2k)^{-1}t$ for some $k\ge j\ge 1$, then
Lemma \ref{lem:hilbertschmidt} and the group property yield the bound 
\begin{multline*}
\|Ve^{-(s_j-s_{j-1})P_0}V\|_{\mL^1}\le C_{\supp(V)}\,2^{\frac n2}\,(s_j-s_{j-1})^{-\frac n2}\|V\|_{L^\infty}^2\\
\le C_{\supp(V)}(4k)^{\frac n2}\,t^{-\frac n2}\|V\|_{L^\infty}^2.
\end{multline*}
Since the volume of integration over the $s$ variables is $t^k/k!$, the integral over the region where $\max_j|s_j-s_{j-1}|>(2k)^{-1}t$  contributes to $W_k(t)$ a term with $\mL^1$ norm at most $C_{\supp(V)}(4k)^{\frac n2}(k!)^{-1}t^{k-\frac n2}\|V\|_{L^\infty}^k$.

If each $s_j-s_{j-1}\le (2k)^{-1}t$, then $t-s_k+s_1>\frac 12 t$, hence either $t-s_k>\frac 14t$ or $s_1>\frac 14t$. In the former case, using Lemma \ref{lem:hilbertschmidt}, $L^2$ boundedness of the heat kernel, and that $\mL^2$ is an ideal, we get the bound
\begin{multline*}
\|e^{-(t-s_k)P_0}\,V\,e^{-(s_k-s_{k-1})P_0}\,V\,\cdots V\,e^{-(s_2-s_1)P_0}\,V\,e^{-s_1P_0}\|_{\mL^1}\\
\le C_{\supp(V)}\|V\|_{L^\infty}^k\,t^{-\frac n4}\,s_1^{-\frac n4}.
\end{multline*}
Since the total volume of integration over this range of $(s_k,\ldots,s_2)$ is less than $(2k)^{-(k-1)}t^{k-1}$,
the contribution to $W_k(t)$ over this region has $\mL^1$ norm bounded by
$$
C_{\supp(V)}\|V\|_{L^\infty}^k\,(2k)^{-(k-1)}t^{k-\frac n2},\qquad n\le 3.
$$
Similar consideration of $s_1>\frac 14t$ leads to the same bound, and putting the above bounds together, using $k^k\ge k!$, we get
$$
\|W_k(t)\|_{\mL^1}\le C_{\supp(V)}\frac{k^{\frac n2}}{k!}\|V\|_{L^\infty}^k\, t^{k-\frac n2} ,\qquad n\le 3.
$$

The above argument fails if $n\ge 4$, so we assume \eqref{assump2} holds if $n\ge 4$.
Consider $W_k(t)$. For each $(s_k,\ldots,s_1)$ in the region of integration, at least one of the terms $t-s_k$ or $s_j-s_{j-1}$ is greater than $t/(k+1)$.
Applying assumption \eqref{assump2}, where $K=\supp(V)$, and the fact that $\mL^1$ is an ideal, we conclude
\begin{multline*}
\|e^{-(t-s_k)P_0}\,V\,e^{-(s_k-s_{k-1})P_0}\,V\,\cdots V\,e^{-(s_2-s_1)P_0}\,V\,e^{-s_1P_0}\|_{\mL^1}\\
\le C_{\supp(V)}\|V\|_{L^\infty}^k\,\Bigl(\frac t{k+1}\Bigr)^{-\frac n2}
\end{multline*}
uniformly over the region of integration. Thus, we again get the bound
$$
\|W_k(t)\|_{\mL^1}\le C_{\supp(V)}\frac{k^{\frac n2}}{k!}\|V\|_{L^\infty}^k\, t^{k-\frac n2}.
$$

In each case, the sum $\sum_{k=1}^\infty W_k(t)$ converges in $\mL^1$ for $t>0$, and bringing the trace into the sum we can write
$$
\trace\bigl(e^{-tP_V}-e^{-tP_0}\bigr)=\sum_{k=1}^\infty\trace\bigl(W_k(t)\bigr).
$$
Furthermore, by the above bounds on $\|W_k(t)\|_{\mL^1}$,
$$
\Bigl|\trace\bigl(e^{-tP_V}-e^{-tP_0}\bigr)-\sum_{k=1}^m\trace\bigl(W_k(t)\bigr)\Bigr|\le C_{V,m}\,t^{m+1-\frac n2}\,,\quad t\in(0,1].
$$

For $V\in L^\infty_{\comp}(M)$ the term $\trace\bigl(W_1(t)\bigr)$ has an expansion to all orders in $t$. To see this, write
\begin{align*}
\trace\bigl(W_1(t)\bigr)&\,=\,-\int_0^t \int_{M\times M}H_0(t-s,x,y)\,V(y)\,H_0(s,y,x)\,d\mu(x)\,d\mu(y)\,ds \\
&\,=\,-t\int_M H_0(t,y,y)\,V(y)\,d\mu(y),\rule{0pt}{18pt}
\end{align*}
where we use the group property of the heat kernel. The expansion of $H_0(t,y,y)$, see \eqref{kernelform} below, yields an expansion for $\trace\bigl(W_1(t)\bigr)$ of the form on the right hand side of \eqref{eqn:asymptotics} for arbitrary positive integer $m$.

Generally, for all $k$ the function $\trace\bigl(W_k(t)\bigr)$ involves $H_0(t,x,y)$ only for $x,y\in\supp(V)$. This follows since, after using the composition rule, we can write 
$(-1)^k\,\trace\bigl(W_k(t)\bigr)$, for $k\ge 2$ and $t>0$, as
\begin{multline*}
\int_{0<s_1<\cdots\,<s_k<t}\int_{M^k}H_0(t+s_1-s_k,y_1,y_k)\,H_0(s_k-s_{k-1},y_k,y_{k-1})\cdots \\
\times H_0(s_2-s_1,y_2,y_1) V(y_k)\,\cdots\,V(y_1)\,d\mu(y_1)\cdots d\mu(y_k)\,ds_1\cdots ds_k.
\end{multline*}

Let $\Lambda^{k-1}\subset\R^k$ be the $(k-1)$-simplex, consisting of $\bfr=(r_1,\ldots,r_k)$ with $r_j>0$ for all $j$, and with $r_1+\cdots+r_k=1$. Let $d\bfr$ be the measure on $\Lambda^{k-1}$ induced by projection onto $(r_2,\ldots,r_k)$, and let $\bfy=(y_1,\ldots,y_k)\in M^k$. Then, by cyclicity of the integrand, we can write $(-1)^k\,\trace\bigl(W_k(t)\bigr)$ as
\begin{multline}\label{traceform}
\frac{t^k}{k}\int_{\Lambda^{k-1}}\int_{M^k}H_0(tr_k,y_1,y_k)\,H_0(tr_{k-1},y_k,y_{k-1})\cdots H_0(tr_1,y_2,y_1)\\
\times V(y_k)\,\cdots\,V(y_1)\,d\mu(\bfy)\,d\bfr.
\end{multline}
We then have the following simple corollary of Lemma \ref{diagest}, which allows us to henceforth reduce matters to the case of $M$ compact.
\begin{corollary}\label{cor:reduc}
Suppose that $(\tilde M,\tilde g)$ is a compact Riemannian manifold that isometrically contains a neighborhood of $\supp(V)\subset M$. Then Theorem \ref{thm:main} holds for $V$ on $(M,\g)$ iff it holds for $V$ on $(\tilde M,\tilde g)$.
\end{corollary}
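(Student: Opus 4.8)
The plan is to deduce the corollary from the single fact that the relative traces on $(M,\g)$ and on $(\tilde M,\tilde g)$ differ by a quantity that is $\bigO(t^N)$ as $t\to 0^+$ for every $N$. Granting this, both parts of Theorem \ref{thm:main} transfer between $M$ and $\tilde M$: the conclusion ``$V\in H^m$'' is literally the same statement, since $V$ is supported in a compact set over which the two manifolds are isometric, so its Sobolev norms may be computed in common coordinate charts; and the hypothesis \eqref{eqn:asymptotics} (resp.\ the conclusion that $r_{m+2}\in C([0,1])$) carries over with the \emph{same} coefficients $c_j$ and with $r_{m+2}$ altered only by an $\bigO(t^{N-m-2})$ term, which for $N$ large is $\bigO(t)$ and hence harmless.

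To compare the traces I would start from $\trace(e^{-tP_V}-e^{-tP_0})=\sum_{k\ge1}\trace(W_k(t))$ of \S\ref{sec:heatkernel}, together with the analogous (classical) expansion on the compact manifold $\tilde M$. Writing $K=\supp(V)$ and using $V=\one_K V\one_K$ with cyclicity of the trace and the group law, one rewrites $(-1)^k\trace(W_k(t))$ as $\int\trace\bigl(V\,B_1(v_1)\,V\cdots V\,B_k(v_k)\bigr)\,ds_1\cdots ds_k$, where $B_j(v)=\one_K e^{-vP_0}\one_K$, the integral runs over $\{0<s_1<\cdots<s_k<t\}$, and $v_1+\cdots+v_k=t$; similarly on $\tilde M$ with $\tilde B_j(v)=\one_K e^{-v\tilde P_0}\one_K$. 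The crucial estimate is
\[
\bigl\|\one_K\bigl(e^{-vP_0}-e^{-v\tilde P_0}\bigr)\one_K\bigr\|_{\mL^1}\le C_N\,v^N,\qquad 0<v\le1,\quad\forall N.
\]
This strengthens Lemma \ref{diagest}: taking $\chi\equiv1$ on a neighborhood of $K$ and running the same Duhamel identity shows that $H_0(v,x,y)-\tilde H_0(v,x,y)$, along with all of its $x$- and $y$-derivatives, is $\bigO(v^N)$ uniformly for $x,y$ in a neighborhood of $K$, because the forcing term $-(\partial_v-\Delta_\g)(\chi\tilde H_0)$ is supported at a fixed positive distance from $K$ and is rapidly decaying there together with all derivatives. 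Inserting a cutoff $\psi\in C^\infty_{\comp}$ with $\psi\equiv1$ near $K$, and using that an operator on $\tilde M$ with smooth kernel is trace class with trace norm controlled by finitely many derivatives of its kernel, then yields the displayed bound.

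With this in hand I would bound $\trace(W_k(t))-\trace(\tilde W_k(t))$ by telescoping $V B_1 V\cdots V B_k$ into $V\tilde B_1 V\cdots V\tilde B_k$ one factor at a time, and estimating each of the $k$ resulting terms exactly as in \S\ref{sec:heatkernel}: split the $s$-integration according to which $v_j$ is largest; the factor at that index, if it is a genuine $B_j$ or $\tilde B_j$, has $\mL^1$ norm $\lesssim(t/k)^{-n/2}$ just as in \S\ref{sec:heatkernel}, while if it is the difference it has $\mL^1$ norm $\le C_N v_j^N\le C_N t^N$ by the displayed estimate; every other factor has operator norm $\le1$, and when the difference factor is not the one of largest time it contributes operator norm $\le C_N v_j^N\le C_N t^N$. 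Since the region of integration has measure $t^k/k!$, each $k$ contributes at most $C_N\,k^{n/2+2}(k!)^{-1}\|V\|_{L^\infty}^k\,t^{k-\frac n2+N}$ (the extra polynomial factor in $k$ accounting for the choices of positions), and summing over $k$ converges; as $N$ is arbitrary the total is $\bigO(t^N)$ for every $N$, as required.

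The main obstacle is the displayed trace-norm estimate, i.e.\ upgrading Lemma \ref{diagest} to control derivatives of $H_0-\tilde H_0$ near $K$ and converting a pointwise kernel bound into a trace-norm bound; this is where compactness of $\tilde M$ and the localization of $(\partial_v-\Delta_\g)(\chi\tilde H_0)$ away from $K$ are used. Everything else is the bookkeeping already carried out in \S\ref{sec:heatkernel}, applied to the telescoped products.
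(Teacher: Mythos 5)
Your overall plan---compare the two relative traces by telescoping the Duhamel series one heat factor at a time and invoking infinite-order agreement of the heat kernels near $K=\supp V$---is the right idea, and your bookkeeping for transferring the two halves of Theorem \ref{thm:main} between $M$ and $\tilde M$ is fine. The problem is the central estimate $\|\one_K(e^{-vP_0}-e^{-v\tilde P_0})\one_K\|_{\mL^1}\le C_N v^N$. You propose to obtain it by showing that all $x$- and $y$-derivatives of $H_0-\tilde H_0$ near $K$ are $\bigO(v^N)$ ``by running the same Duhamel identity.'' But in the identity
\[
(H_0-\chi\tilde H_0)(t,x,y)=\int_0^t\!\int_M H_0(t-s,x,z)\,F(s,z,y)\,d\mu(z)\,ds,\qquad F=(\partial_s-\Delta_\g)(\chi\tilde H_0),
\]
only the $y$-derivatives fall harmlessly on $F$; the $x$-derivatives fall on $H_0(t-s,x,z)$, and Lemma \ref{diagest} gives no control of $\partial_x^\alpha H_0$ on a general complete $(M,\g)$ with only a Ricci lower bound. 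Your stated justification (``the forcing term $\ldots$ is rapidly decaying there together with all derivatives'') addresses only the $y$-derivatives. The $x$-derivatives require something more---for instance interior parabolic regularity for $H_0(\cdot,\cdot,z)$ using the already-known smallness of $H_0$ on a slightly enlarged compact set, or a symmetry argument trading $\partial_x$ for $\partial_y$---and you do not supply it. That is a genuine gap in the step you yourself flag as ``the main obstacle.''

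Beyond the gap, the $\mL^1$ estimate is stronger than necessary. The $L^\infty$ bound of Lemma \ref{diagest} and compactness of $K$ already give $\|\one_K(e^{-vP_0}-e^{-v\tilde P_0})\one_K\|_{\mL^2}\le C_N v^N$ for free, with no derivative bounds, and this can be paired with the Hilbert--Schmidt bound of Lemma \ref{lem:hilbertschmidt} on another heat factor to land the telescoped products in $\mL^1$. Simpler still---and presumably what the paper intends by ``a simple corollary of Lemma \ref{diagest}''---is to telescope at the level of the explicit formula \eqref{traceform}: since that integral involves $H_0(tr_j,y,z)$ only for $y,z\in K$, replace $H_0$ by $\tilde H_0$ one factor at a time under the integral sign, bound the replaced factor by $C_N(tr_l)^N$ using Lemma \ref{diagest}, and integrate out the remaining open chain of kernels one variable at a time using stochastic completeness, $\int_M H_0(s,x,y)\,d\mu(y)\le 1$. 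That route uses nothing beyond the $L^\infty$ estimate of Lemma \ref{diagest} and avoids all operator-norm and derivative machinery.
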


We now recall the construction of an asymptotic formula for $H_0(t,x,y)$ on compact $M$, for example as in \cite[Chapter 23]{chow} and \cite[\S 3.2]{rosenberg}. Choose $c\le 1$ such that the injectivity radius at each point in $M$ is greater than $c$. Define $E(t,x,y)$ on $M\times M$ by
$$
E(t,x,y)=(4\pi t)^{-\frac n2}e^{-\frac{d(x,y)^2}{4t}}.
$$
This is a smooth function on $\{(0,\infty)\times M^2\,:\,d(x,y)<c\}$.
Then there are real valued 
$u_k(x,y)\in C^\infty(M^2)$, supported in $\{d(x,y)<c\}$, such that
\begin{equation}\label{kernelform}
H_0(t,x,y)=E(t,x,y)\sum_{k=0}^N t^k\,u_k(x,y)\,+\,w_N(t,x,y),
\end{equation}
where
$$
u_k(x,y)=u_k(y,x),\qquad u_0(y,y)=1,
$$
and
\begin{equation}\label{wNform}
\bigl|w_N(t,x,y)\bigr|\le C_{N}\,t^{N+1-\frac n2}e^{-\frac{d(x,y)^2}{8t}}, \quad t\in(0,1].
\end{equation}
Note that $w_N\in C^\infty\bigl((0,\infty)\times M^2\bigr)$
since the other terms in \eqref{kernelform} are.

As a corollary of \eqref{kernelform}--\eqref{wNform}, we have the following multiplicative relation, where $0<v<1$,
\begin{equation}\label{kernelprod}
H_0(vt,x,y)\, H_0((1-v)t,x,y)=(4\pi t)^{-\frac n2}\Bigl(H_0(v(1-v)t,x,y)+R(t,v,x,y)\Bigr),
\end{equation}
where $R(t,v,x,y)\in C^\infty\bigl((0,\infty)\times(0,1)\times M^2\bigr)$, and for each $N$,
\begin{equation}\label{Rform}
R(t,v,x,y)=E(v(1-v)t,x,y)\sum_{k=0}^N\sum_{j=0}^{2k} t^k\,v^j\,r_{k,j}(x,y)\,+\,R_N(t,v,x,y),
\end{equation}
where
\begin{equation}\label{rkform}
r_{k,j}(x,y)=r_{k,j}(y,x),\qquad r_{0,0}(x,y)=u_0(x,y)^2-u_0(x,y),
\end{equation}
and
\begin{equation}\label{RNform}
|R_N(t,v,x,y)|
\le C_{N}\,t^{N+1}E\bigl(2v(1-v)t,x,y\bigr),\quad t\in(0,1].
\end{equation}


\section{Preliminary reductions}\label{sec:reductions}

By the results of \S \ref{sec:heatkernel}, it suffices to establish the analogue of Theorem \ref{thm:main} where $\trace\bigl(e^{-tP_V}-e^{-tP_0}\bigr)$ is replaced by 
$\sum_{k=2}^\infty \trace\bigl(W_k(t)\bigr)$, on a compact Riemannian manifold $(M,\g)$. 
In this section we reduce matters to the following two theorems.

\begin{theorem}\label{thm:traceW2}
If $V\in L^\infty_{\comp}\cap H^m(M)$ is real valued, then one can write
\begin{equation*}
\trace\bigl(W_2(t)\bigr)=(4\pi t)^{-\frac n2}
\Bigl(c_{2,2} t^2+\cdots+c_{2,2+m}t^{2+m}+\epsilon(t)t^{2+m}\Bigr),
\end{equation*}
where $\lim_{t\rightarrow 0^+}\epsilon(t)=0$.

Conversely, assume $V\in L^\infty_{\comp}\cap H^{m-1}(M)$ is real valued. If one can write
\begin{equation}\label{eqn:traceW2'}
\trace\bigl(W_2(t)\bigr)=(4\pi t)^{-\frac n2}
\Bigl(c_{2,2} t^2+\cdots+c_{2,1+m}t^{1+m}+r_{2,2+m}(t)t^{2+m}\Bigr),
\end{equation}
where $|r_{2,2+m}(t)|\le C$ for $0<t\le 1$, then necessarily $V\in H^{m}(M)$.
\end{theorem}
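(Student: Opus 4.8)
The plan is to start from formula \eqref{traceform} with $k=2$: using the symmetry $H_0(s,x,y)=H_0(s,y,x)$ and then the multiplicative rule \eqref{kernelprod}--\eqref{RNform} applied to $H_0(tv,x,y)\,H_0(t(1-v),x,y)$, one gets
$$
\trace\bigl(W_2(t)\bigr)=\tfrac12\,(4\pi t)^{-\frac n2}\,t^2\bigl(g(t)+\rho(t)\bigr),\qquad g(t):=\int_0^1\bigl\langle e^{-v(1-v)tP_0}V,\,V\bigr\rangle\,dv,
$$
where $\rho(t)=\int_0^1\!\int_{M\times M}R(t,v,x,y)\,V(x)V(y)\,d\mu\,d\mu\,dv$ collects the remainder term $R$. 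The theorem then amounts to showing that $g(t)+\rho(t)$ admits an expansion $\sum_{k=0}^m b_k t^k+o(t^m)$ exactly when $V\in H^m(M)$. The organizing principle is that the Sobolev regularity of $V$ is seen only through $g$: I would show that $\rho$ always possesses a genuine order-$m$ expansion once $V\in H^{m-1}(M)$, so that it cannot interfere with the detection of $H^m$.

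For the main term I would pass to the spectral resolution $\{E_\lambda\}$ of $P_0$ on the compact manifold $M$ and set $d\nu=d\langle E_\lambda V,V\rangle$, a finite positive measure with $\int_0^\infty\lambda^k\,d\nu=\langle P_0^k V,V\rangle$, finite precisely when $V\in H^k(M)$. Then $g(t)=\int_0^\infty\psi(t\lambda)\,d\nu(\lambda)$ where $\psi(\tau)=\int_0^1 e^{-v(1-v)\tau}\,dv$ is completely monotone on $[0,\infty)$ and $(-1)^k\psi^{(k)}(0)=\int_0^1(v(1-v))^k\,dv>0$ for all $k$. If $V\in H^m$, Taylor expanding $\psi$ to order $m$ and applying dominated convergence (the remainder integrand is $O(\lambda^m)$ and tends to $0$) gives $g(t)=\sum_{k=0}^m c^{(g)}_k t^k+o(t^m)$ with $c^{(g)}_k=\frac{\psi^{(k)}(0)}{k!}\langle P_0^k V,V\rangle$. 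For the converse, complete monotonicity forces the tail $\widetilde\psi(\tau):=\psi(\tau)-\sum_{k<m}\frac{\psi^{(k)}(0)}{k!}\tau^k$ to satisfy $(-1)^m\widetilde\psi\ge0$ on $[0,\infty)$, with $\widetilde\psi(\tau)/\tau^m\to\psi^{(m)}(0)/m!\ne0$ as $\tau\to0$. Since $g(t)-\sum_{k<m}c^{(g)}_k t^k=\int_0^\infty\widetilde\psi(t\lambda)\,d\nu$, Fatou's lemma gives $\liminf_{t\to0^+}(-1)^m t^{-m}\bigl(g(t)-\sum_{k<m}c^{(g)}_k t^k\bigr)\ge\frac{|\psi^{(m)}(0)|}{m!}\int_0^\infty\lambda^m\,d\nu$, which is $+\infty$ when $V\notin H^m$. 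Thus $g(t)-\sum_{k<m}c^{(g)}_k t^k$ can be $O(t^m)$ only if $V\in H^m$.

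Next I would expand $\rho$. By \eqref{Rform}--\eqref{RNform}, $\rho(t)=\sum_{k=0}^{N}t^k\sum_{j=0}^{2k}\int_0^1 v^j\,\Psi_{k,j}(v(1-v)t)\,dv+O(t^{N+1})$, where $\Psi_{k,j}(s)=\langle A^{k,j}_s V,V\rangle$ and $A^{k,j}_s$ has the smooth kernel $E(s,x,y)\,r_{k,j}(x,y)$. After a partition of unity (the off-diagonal part of $E(s)$ contributes $O(s^\infty)$) and a Taylor expansion of $r_{k,j}$ about the diagonal, the Gaussian moment identities — in particular $(x-y)_i(x-y)_j\,E(s,x,y)=4s^2\partial_{y_i}\partial_{y_j}E(s,x,y)+2s\,\delta_{ij}E(s,x,y)$ — combined with integration by parts onto $V$ produce a parametrix expansion $A^{k,j}_s=\sum_i s^i A^{k,j}_i+(\text{higher order})$ with $A^{k,j}_i$ differential operators, so that $\langle A^{k,j}_s V,V\rangle$ has an order-$\ell$ expansion whenever $V$ carries enough derivatives. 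For $k\ge1$ the explicit factor $t^k$ means $\Psi_{k,j}$ is needed only to order $m-k$, which costs at most $V\in H^{m-k}\subseteq H^{m-1}$. The delicate case is $k=0$: here $r_{0,0}=u_0^2-u_0$ vanishes on the diagonal, hence — being smooth and symmetric — to second order, so locally $r_{0,0}(x,y)=\sum_{i,j}a_{ij}(x,y)(x-y)_i(x-y)_j$; the identity above then writes the kernel of $A^{0,0}_s$ as $s$ times a smooth Gaussian family plus $s^2$ times second $y$-derivatives of one, and two integrations by parts yield $\Psi_{0,0}(s)=\sum_{i=1}^m\beta_i s^i+o(s^m)$ already for $V\in H^{m-1}$. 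Summing, $\rho(t)=\sum_{k=0}^m e_k t^k+o(t^m)$ with finite $e_k$ whenever $V\in H^{m-1}$.

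Finally I would assemble the two halves. For the direct statement, $V\in H^m$ furnishes order-$m$ expansions of both $g$ and $\rho$, hence of $\trace(W_2(t))$ with $\epsilon(t)\to0$. For the converse, assume $V\in H^{m-1}$ and that \eqref{eqn:traceW2'} holds; by uniqueness of asymptotic expansions the polynomial part of $g+\rho$ equals $\sum_{k<m}(2c_{2,k+2})t^k=\sum_{k<m}(c^{(g)}_k+e_k)t^k$, so $g(t)-\sum_{k<m}c^{(g)}_k t^k=2r_{2,2+m}(t)t^m-\bigl(\rho(t)-\sum_{k<m}e_k t^k\bigr)$, whose right side is $O(t^m)$ because $r_{2,2+m}$ is bounded and $\rho(t)-\sum_{k<m}e_k t^k=e_m t^m+o(t^m)$. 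By the Fatou estimate of the second paragraph this forces $\int\lambda^m\,d\nu<\infty$, i.e. $V\in H^m$. The main obstacle will be the $\rho$ analysis, and specifically the claim that the second-order vanishing of $r_{0,0}$ on the diagonal lets $\Psi_{0,0}$ reach order $m$ in $t$ for $V$ merely in $H^{m-1}$: it is this gain of one Sobolev order that decouples the $H^m$-obstruction from every correction term and isolates it in the spectrally transparent term $g$. The supporting bookkeeping — the precise parametrix expansion of the $A^{k,j}_s$, the orders of the $A^{k,j}_i$ together with the correct distribution of Sobolev indices when pairing against $V$, and the localization away from the diagonal — is routine but must be carried out with care.
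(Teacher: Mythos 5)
Your proposal is correct and follows essentially the same route as the paper: the same split of $H_0(vt)H_0((1-v)t)$ via \eqref{kernelprod} into the spectrally transparent term $\langle e^{-v(1-v)tP_0}V,V\rangle$ (handled by dominated convergence one way and Fatou's lemma on the signed Taylor remainder of $e^{-s}$ the other way) plus the remainder $R$, which is shown to cost one fewer Sobolev derivative by Taylor-expanding the $r_{k,j}$ about the diagonal, exploiting the second-order vanishing of $r_{0,0}$, and trading Gaussian moments for derivatives distributed onto both factors of $V$. The only caveat is that your claim $\rho(t)=\sum_{k\le m}e_kt^k+o(t^m)$ for $V\in H^{m-1}$ is slightly stronger than what the argument yields (a bounded, not convergent, order-$t^m$ remainder), but your final assembly only uses the $O(t^m)$ bound, so nothing is lost.
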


\begin{theorem}\label{thm:traceWk}
If $V\in L^\infty_{\comp}\cap H^m(M)$ is real valued, then for $k\ge 2$ one can write
$$
\trace\bigl(W_k(t)\bigr)=(4\pi t)^{-\frac n2}
\Bigl(c_{k,k} t^k+\cdots+c_{k,k+m-1}t^{k+m-1}+r_{k,k+m}(t)t^{k+m}\Bigr),
$$
where for $0\le j\le m$, and a constant $C$ depending on $k$ and $m$, 
$$
|c_{k,k+j}|\le C\,\|V\|_{L^\infty}^{k-2}\|V\|_{H^j}^2,\qquad
\sup_{0<t\le 1}|r_{k,k+m}(t)|\le C\,\|V\|_{L^\infty}^{k-2}\|V\|_{H^m}^2.
$$
\end{theorem}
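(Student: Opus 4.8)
The plan is to start from the cyclic formula \eqref{traceform} for $(-1)^k\trace\bigl(W_k(t)\bigr)$, substitute the near–diagonal expansion \eqref{kernelform} with $N$ large into each factor $H_0(tr_j,y_{j+1},y_j)$, and discard every term containing a factor $w_N$: by \eqref{wNform} (together with the Gaussian decay, which makes the $y$–integrations finite), each such term is $\bigO(t^{N})$ uniformly in $\bfr$, hence absorbed into $r_{k,k+m}(t)$ once $N\ge k+m$. What remains is a finite sum of terms in which $H_0(tr_j,y_{j+1},y_j)$ is replaced by $(4\pi tr_j)^{-\frac n2}e^{-d(y_{j+1},y_j)^2/(4tr_j)}(tr_j)^{l_j}u_{l_j}(y_{j+1},y_j)$. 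The product of the $k$ Gaussian prefactors equals $(4\pi t)^{-\frac{kn}2}\bigl(\prod_j r_j^{-\frac n2}\bigr)\exp\bigl(-\tfrac14\sum_j d(y_{j+1},y_j)^2/(tr_j)\bigr)$, which concentrates as $t\to0$ on the total diagonal $y_1=\cdots=y_k$. Covering a neighborhood of that diagonal in $M^k$ by finitely many coordinate charts (the complementary region being cut off at the cost of an $\bigO(t^N)$ error by the Gaussian) and rescaling $y_1=p$, $y_j=\exp_p(\sqrt t\,w_j)$ for $j\ge2$, each $d\mu(y_j)$ contributes $t^{\frac n2}$, and collecting powers shows that the contribution of a given choice of exponents $(l_j)$ equals $t^{\,k-\frac n2+\sum l_j}$ times an integral over $\bfr\in\Lambda^{k-1}$, $p\in M$ and $\mathbf w\in\R^{n(k-1)}$ of a function that is smooth and Gaussian–decaying in $\mathbf w$, smooth in $\sqrt t$, multiplied by $\prod_j V\bigl(\exp_p(\sqrt t\,w_j)\bigr)$ (with $w_1=0$).

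To extract an expansion in integer powers of $t$ I would Taylor–expand in $\sqrt t$: the smooth data (curvature terms, the $u_{l_j}$, the volume Jacobians) in the usual sense, and each $V(\exp_p(\sqrt t\,w_j))$ by the $L^2$–valued Taylor formula with integral remainder, which is valid for $V\in H^m(M)$ and costs at most $m$ weak derivatives of $V$. Two facts then do the work. First, the rescaled integrand is invariant under the simultaneous reversal $(\sqrt t,\mathbf w)\mapsto(-\sqrt t,-\mathbf w)$ — it depends on $\sqrt t$ only through the coordinates $\sqrt t\,w_j$ and through the even powers $(tr_j)^{l_j}$ — so after the $\mathbf w$–integration only even powers of $\sqrt t$ remain; this yields $\trace(W_k(t))=(4\pi t)^{-\frac n2}\bigl(c_{k,k}t^k+\cdots+c_{k,k+m-1}t^{k+m-1}+r_{k,k+m}(t)\,t^{k+m}\bigr)$ and identifies each $c_{k,k+j}$ as a finite sum of terms $\mathrm{const}\cdot\int_M\bigl(\prod_{i=1}^k\partial^{\alpha_i}V\bigr)\,\kappa\,d\mu$, with $\kappa$ a bounded smooth function supported near $\supp(V)$ and $\sum_i|\alpha_i|\le 2j$; the truncation error producing $r_{k,k+m}(t)$ is handled by the same accounting, with some $V$–factors replaced by their order–$m$ Taylor remainders, controlled in $L^2$ by $\|V\|_{H^m}$. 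Second, since $\sum_i|\alpha_i|\le 2j$, at most one $\alpha_i$ exceeds $j$; integrating by parts redistributes its excess derivatives onto the other factors so that every factor carries at most $j$ derivatives, and then a generalized Hölder inequality combined with the Gagliardo–Nirenberg inequalities on $M$ gives $\bigl|\int_M(\prod_i\partial^{\alpha_i}V)\kappa\,d\mu\bigr|\le C_{k,j}\|V\|_{L^\infty}^{k-2}\|V\|_{H^j}^2$. The relevant interpolation exponents work out exactly: a factor $\partial^a V$ estimated in $L^{2j/a}$ obeys $\|\partial^a V\|_{L^{2j/a}}\lesssim\|V\|_{H^j}^{a/j}\|V\|_{L^\infty}^{1-a/j}$, and the exponents $a/j$ sum to at most $1$ over the factors precisely because $\sum|\alpha_i|\le 2j$. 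This proves the bound on $c_{k,k+j}$, and, applied to the truncation error, gives $\sup_{0<t\le1}|r_{k,k+m}(t)|\le C_{k,m}\|V\|_{L^\infty}^{k-2}\|V\|_{H^m}^2$.

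To make the dependence on $k$ explicit (needed when these estimates are summed over $k$ in \S\ref{sec:reductions}) and to handle the remainder sharply at endpoint regularity, I would keep the $\bfr$–integration to the end: integration over $\Lambda^{k-1}$ turns each quadratic piece $\la V,(-\Delta_\g)^{a}V\ra$ into a scalar profile $F(s)=\int_{\Lambda^{k-1}}(\text{polynomial in }\bfr)\,e^{-s\,q(\bfr)}\,d\bfr$ that is entire and decays as $s\to\infty$; its $m$–th Taylor remainder is therefore $\bigO(s^{m})$ globally, and pairing against the spectral measure of $-\Delta_\g$ associated to $V$ (as in the $W_2$ case) yields a remainder bounded by $C\,t^m\,\la V,(-\Delta_\g)^mV\ra\le C\,t^m\|V\|_{H^m}^2$, with $C$ carrying the factor $\mathrm{vol}(\Lambda^{k-1})=1/(k-1)!$ and the Leibniz constants from the integration by parts, hence summable in $k$ against $\|V\|_{L^\infty}^{k-2}$. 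The main obstacle is the endpoint regularity: at every stage one must avoid differentiating a single copy of $V$ more than $m$ times — this is what forces the integration–by–parts rebalancing before estimating, and what makes the remainder analysis via the decaying profile $F$ and the spectral measure necessary (a naive operator Taylor expansion with Lagrange remainder would require $V\in H^{m+1}$). The uniform control of the chart cutoff and of the $w_N$– and $R_N$–errors during the rescaling is routine given \eqref{wNform} and \eqref{RNform}.
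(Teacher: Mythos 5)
Your plan diverges substantially from the paper's. You rescale $y_j=\exp_p(\sqrt t\,w_j)$, Taylor-expand each factor $V(\exp_p(\sqrt t\,w_j))$ in $\sqrt t$ to order $m$, and use evenness in $(\sqrt t,\mathbf w)$ to see integer powers of $t$. The paper never Taylor-expands $V$ at all: it writes the quadratic phase as $Q_\bfr(\bfu)$ plus perturbations (\eqref{qexpansion}), expands the exponential in the perturbation, and proves Lemma~\ref{lem:reduction} by a double induction on $m$ and $L$. The $m$-induction step differentiates $g$ in $t$ and uses the identity $\partial_t e^{-Q_\bfr(\bfu)/4t}=(Q_\bfr^{-1}(\partial_\bfu)+\tfrac{n(k-1)}{2t})e^{-Q_\bfr(\bfu)/4t}$, which produces exactly two $\bfu$-derivatives; these are split among distinct $V$-factors (Lemma~\ref{utox} is used to move the second derivative off a single factor when both land on the same one). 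The Gagliardo--Nirenberg step and the ``at most one $\alpha_i$ exceeds $j$'' counting are shared.

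There is a genuine gap in your remainder analysis at endpoint regularity, and it is precisely at the point you flag as the ``main obstacle.'' Taylor-expanding a single factor $V(\exp_p(\sqrt t\,w_j))$ to order $m$ gives a remainder of size $O((\sqrt t)^m)=O(t^{m/2})$, \emph{not} $O(t^m)$. In the $k$-fold product, a term with one remainder factor crossed against low-order polynomial pieces from the other factors has $\sqrt t$-degree anywhere between $m$ and $2m-1$; these are neither pure powers of $t$ (so they do not feed cleanly into the $c_{k,k+j}$) nor $O(t^m)$. Parity kills odd Taylor coefficients of the polynomial part, but an even function that is only $O(\sigma^m)$ need not be $O(\sigma^{m+1})$, so evenness does not upgrade the remainder. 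Your proposed patch --- the profile $F(s)$ paired against the spectral measure of $-\Delta_\g$, ``as in the $W_2$ case'' --- is exactly the paper's Proposition~\ref{prop:traceH2}, but it relies on the quadratic pairing $\sum_j|b_j|^2 e^{-s\rho_j}$, which does not exist when $k\ge 3$: there are $k-2$ additional $V$-factors that cannot be absorbed into a spectral pairing, so the ``one Taylor remainder in the exponential rather than in $V$'' device is unavailable there. The paper's induction-on-$m$-via-$\partial_t$ is the mechanism that replaces it: each $t$-derivative puts at most one new derivative on each $V$-factor, so $m$ derivatives of $g$ never exceed $m$ derivatives on any single copy of $V$, and Lemma~\ref{lem:expansion} then delivers exactly the $O(t^m)$ remainder with the $\|V\|_{H^m}^2\|V\|_{L^\infty}^{k-2}$ bound. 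If you want to keep the rescaling picture, you would need to perform the integration by parts \emph{before} Taylor-expanding (on the Gaussian, not on $V$), at which point you have essentially reconstructed the paper's argument.

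Two smaller points: your derivation of the $c_{k,k+j}$ bound is fine \emph{given} that those coefficients are correctly identified, but the identification itself inherits the gap above once $2j>m$. And the claim that only $\mathrm{vol}(\Lambda^{k-1})=1/(k-1)!$ and Leibniz constants enter the $k$-dependence is optimistic; the paper simply allows $C_{k,m}$ to grow with $k$ and cancels it against the $1/k!$ in \eqref{eqn:traceerror}.
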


That $V\in L^\infty_{\comp}\cap H^m(M)$ implies existence of the asymptotic expansion \eqref{eqn:asymptotics} of order $m+2$ follows easily from these two theorems: by the bound $\|W_k(t)\|_{\mL^1}\le C^k\,k^{\frac n2}\,t^{k-\frac n2}/k!$, we see that
\begin{equation}\label{eqn:traceerror}
\trace\sum_{k=m+3}^\infty W_k(t)\le C\,t^{m+3-\frac n2},\qquad 0<t\le 1.
\end{equation}
On the other hand, Theorems \ref{thm:traceW2} and \ref{thm:traceWk} show that, with $c_j=\sum_{k=2}^{j}c_{k,j},$
$$
\trace\sum_{k=2}^{m+2} W_k(t)=(4\pi t)^{-\frac n2}
\Bigl(c_2 t^2+\cdots+c_{m+1}t^{m+1}+c_{m+2}t^{m+2}+\epsilon(t)t^{m+2}\Bigr).
$$
 
The other direction of Theorem \ref{thm:main}, that existence of an asymptotic expansion implies regularity, is carried out by induction. 
Assume $m\ge 1$ and $V\in L^\infty_{\comp}\cap H^{m-1}(M)$, which trivially holds if $m=1$ since $L^\infty_{\comp}(M)\subset L^2(M)$, and assume
\eqref{eqn:asymptotics} holds. By \eqref{eqn:traceerror} this implies that, with $|r_{m+2}(t)|\le C$,
$$
\trace\sum_{k=2}^{m+2} W_k(t)=(4\pi t)^{-\frac n2}
\Bigl(c_2 t^2+\cdots+c_{m+1}t^{m+1}+r_{m+2}(t)t^{m+2}\Bigr).
$$
Since $V\in L^\infty_{\comp}\cap H^{m-1}(M)$, Theorem \ref{thm:traceWk} shows that $\trace\sum_{k=3}^{m+2}W_k(t)$ has a similar expansion, with coefficients that are bounded in terms of the $L^\infty$ and $H^j$ norms of $V$ with $j\le m-1$. Hence
the relation \eqref{eqn:traceW2'} holds, and we conclude $V\in H^m(M)$.

In proving Theorems \ref{thm:traceW2} and \ref{thm:traceWk}, we will use a simple calculus lemma.

\begin{lemma}\label{lem:expansion}
Suppose that $f\in C^\infty\bigl((0,1)\bigr)$, and that for all $0\le j\le m$
$$
\sup_{0<t<1}|f^{(j)}(t)|\le C_j
$$
for finite constants $C_j$. Then for $t\in (0,1)$ one has
$$
f(t)=a_0+a_1 t+\cdots +a_{m-1}t^{m-1}+r_m(t)\,t^m,
$$
where $\sup_{0<t<1}|r_m(t)|<C_m/m!,$ and $|a_j|\le C_j/j!.$
\end{lemma}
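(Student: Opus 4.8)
The plan is to read the conclusion as nothing more than the Taylor expansion of $f$ about $t=0$, so the only preliminary work is to give meaning to the ``boundary derivatives'' $f^{(j)}(0^+)$ for $0\le j\le m-1$.

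First I would observe that for each $0\le j\le m-1$ the one-sided limit $b_j:=\lim_{t\to 0^+}f^{(j)}(t)$ exists: since $|f^{(j+1)}|\le C_{j+1}$ on $(0,1)$, the function $f^{(j)}$ is Lipschitz, hence uniformly continuous, on $(0,1)$, so it extends continuously to $[0,1)$; letting $t\to 0^+$ in $|f^{(j)}(t)|\le C_j$ gives $|b_j|\le C_j$. I then set $a_j:=b_j/j!$, so that $|a_j|\le C_j/j!$, and define $g(t):=f(t)-\sum_{j=0}^{m-1}a_j t^j$ on $(0,1)$. Subtracting a polynomial of degree $m-1$ changes nothing of the above: $g\in C^\infty\bigl((0,1)\bigr)$, each $g^{(j)}$ with $0\le j\le m-1$ extends continuously to $[0,1)$ with $g^{(j)}(0)=b_j-b_j=0$, and $g^{(m)}=f^{(m)}$, so $\sup_{0<t<1}|g^{(m)}(t)|\le C_m$. (A short application of the mean value theorem shows the one-sided derivatives of the extended $g$ at $0$ agree with these boundary values, so $g$ is genuinely $C^{m-1}$ on $[0,t]$ for every $t<1$.)

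Next, for fixed $t\in(0,1)$ the function $g$ is $C^{m-1}$ on $[0,t]$ and $m$ times differentiable on $(0,t)$, so Taylor's theorem with the Lagrange form of the remainder, together with $g(0)=g'(0)=\cdots=g^{(m-1)}(0)=0$, gives
$$
g(t)=\frac{g^{(m)}(\xi_t)}{m!}\,t^m=\frac{f^{(m)}(\xi_t)}{m!}\,t^m
$$
for some $\xi_t\in(0,t)$. Hence $f(t)=a_0+a_1t+\cdots+a_{m-1}t^{m-1}+r_m(t)\,t^m$ with $r_m(t):=f^{(m)}(\xi_t)/m!$, and
$$
\sup_{0<t<1}|r_m(t)|\le\frac1{m!}\,\sup_{0<\xi<1}|f^{(m)}(\xi)|\le\frac{C_m}{m!}.
$$
Alternatively one may iterate the fundamental theorem of calculus to write $g(t)=\frac1{(m-1)!}\int_0^t(t-s)^{m-1}g^{(m)}(s)\,ds$ and bound the integral directly, which also yields $|r_m(t)|\le C_m/m!$.

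There is essentially no obstacle here: the statement is the classical Taylor theorem, and the single point requiring a word of care is the existence of the limits $f^{(j)}(0^+)$, which is immediate from the Lipschitz bound supplied by the next derivative. I note only that the bound just obtained is the non-strict one $\sup|r_m|\le C_m/m!$; strictness can actually fail (e.g.\ $f(t)=C_m t^m/m!$ gives $r_m\equiv C_m/m!$), but the non-strict estimate is all that is used in the sequel.
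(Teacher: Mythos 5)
Your proof is correct and follows essentially the same route as the paper: both arguments rest on the existence of the one-sided limits $\lim_{t\to 0^+}f^{(j)}(t)$ for $j\le m-1$ (guaranteed by the Lipschitz bound from the next derivative) followed by Taylor's theorem, the only cosmetic difference being that you extend the derivatives to $t=0$ and expand there, while the paper expands about $\epsilon>0$ and lets $\epsilon\to 0^+$. Your observation that the strict inequality $\sup|r_m|<C_m/m!$ can fail and should be read as non-strict is also correct and harmless for the applications.
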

The lemma is proved taking the Taylor expansion about $\epsilon>0$, then letting $\epsilon\rightarrow 0^+$, using that $\lim_{\epsilon\rightarrow 0^+}f^{(j)}(\epsilon)$ exists if $0\le j\le m-1$.


\section{Proof of Theorem \ref{thm:traceW2}}\label{sec:traceW2}
In this, and subsequent sections, we will assume $M$ is compact.
We reduce the proof of Theorem \ref{thm:traceW2} to that of two propositions, which we then prove in this section.
As in \S\ref{sec:heatkernel}, we write $\trace\bigl(W_2(t)\bigr)$ as
$$
\tfrac 12 \, t^2\int_0^1\int_{M^2}
H_0((1-v)t,y,z)H_0(vt,y,z)\,V(y)\,V(z)
\,d\mu(y)\,d\mu(z)\,dv.
$$
We now apply the relation \eqref{kernelprod}. We show that the remainder $R$ leads to a term that is better by one power of $t$ than the main term, for $V$ of a given Sobolev regularity.

\begin{proposition}\label{prop:traceR}
If $V\in L^\infty_{\comp}\cap H^{m-1}(M)$, then one can write
\begin{multline}\label{eqn:traceR}
\int_0^1\int_{M^2}
R(t,v,y,z)\,V(y)\,V(z)
\,d\mu(y)\,d\mu(z)\,dv\\
=a_1 t+\cdots+a_{m-1}t^{m-1}+r_m(t)t^m,
\end{multline}
where, for fixed constants $C_j$, and $1\le j\le m-1$,
$$
|a_j|\le C_j\,\|V\|_{H^{j-1}}^2,\qquad
\sup_{0<t<1}|r_m(t)|\le C_m\,\|V\|_{H^{m-1}}^2.
$$
\end{proposition}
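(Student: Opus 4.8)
The plan is to substitute the expansion \eqref{Rform} of $R$ into the left-hand side of \eqref{eqn:traceR} with $N=m-1$, discard the remainder $R_N$ immediately, and then reduce each of the finitely many main terms to a single scalar function of $t$. For the remainder, by \eqref{RNform} and since $0<2v(1-v)t\le t\le1$,
$$
\Bigl|\int_0^1\int_{M^2}R_N(t,v,y,z)\,V(y)\,V(z)\,d\mu\,dv\Bigr|
\le C_N\,t^{N+1}\sup_{0<\sigma\le1}\int_{M^2}E(\sigma,y,z)\,|V(y)|\,|V(z)|\,d\mu,
$$
and since $M$ is compact the operator with kernel $E(\sigma,y,z)$ has $L^2$-operator norm bounded uniformly for $\sigma\in(0,1]$ (Schur's test, using that $\int_M E(\sigma,y,z)\,d\mu(z)$ is bounded), so the inner integral is at most $C\|V\|_{L^2}^2$; this term goes into $r_m(t)t^m$. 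What remains is a finite sum over $0\le k\le N$, $0\le j\le 2k$ of $t^k\int_0^1 v^j\,G_{k,j}(v(1-v)t)\,dv$, where $G_{k,j}(\sigma):=\int_{M^2}E(\sigma,y,z)\,r_{k,j}(y,z)\,V(y)\,V(z)\,d\mu$.

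The crux is the estimate that, for each integer $l\ge0$ and each $h\in C^\infty(M^2)$ supported in $\{d(y,z)<c\}$, the function $\sigma\mapsto g(\sigma):=\int_{M^2}E(\sigma,y,z)\,h(y,z)\,V(y)\,V(z)\,d\mu$ lies in $C^\infty((0,1))$ with $\sup_{0<\sigma<1}|g^{(l)}(\sigma)|\le C\,\|V\|_{H^l}^2$ whenever $V\in H^l(M)$ is real valued. Granting this and applying Lemma \ref{lem:expansion} to each $G_{k,j}$, I would expand $G_{k,j}(\sigma)$ as a polynomial of degree $m-1-k$ in $\sigma$ with $\sigma^l$-coefficient bounded by $C\|V\|_{H^l}^2$, plus a remainder bounded by $C\|V\|_{H^{m-k}}^2\,\sigma^{m-k}$; substituting $\sigma=v(1-v)t$ and doing the Beta integrals in $v$, the $k$-th term becomes a polynomial in $t$ starting at $t^k$, with coefficient of $t^{k+l}$ at most $C\|V\|_{H^l}^2$, plus $O\bigl(t^m\|V\|_{H^{m-1}}^2\bigr)$. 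For $k\ge1$ this has no constant term, and for $j\le m-1$ the coefficient of $t^j$ only involves $\|V\|_{H^{j-k}}^2\le\|V\|_{H^{j-1}}^2$. The $k=0$ term needs an extra observation: only $r_{0,0}=u_0^2-u_0$ occurs, and since $u_0(y,z)=1+O(d(y,z)^2)$ (the standard expansion of the leading heat coefficient, with $u_0(y,y)=1$), $r_{0,0}$ vanishes to second order on the diagonal, so $r_{0,0}(y,z)=d(y,z)^2\rho(y,z)$ with $\rho\in C^\infty$ supported near the diagonal. The elementary identity $d(y,z)^2E(\sigma,y,z)=2n\sigma\,E(\sigma,y,z)+4\sigma^2\partial_\sigma E(\sigma,y,z)$ then gives $G_{0,0}(\sigma)=2n\sigma\,\tilde g(\sigma)+4\sigma^2\tilde g'(\sigma)$ with $\tilde g(\sigma)=\int_{M^2}E(\sigma,y,z)\rho(y,z)V(y)V(z)\,d\mu$, to which the key estimate applies; hence the $k=0$ contribution is also a polynomial in $t$ starting at $t^1$, with coefficient of $t^j$ bounded by $C\|V\|_{H^{j-1}}^2$ and remainder bounded by $C\|V\|_{H^{m-1}}^2\,t^m$. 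Collecting all contributions yields \eqref{eqn:traceR} with the asserted bounds.

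To prove the key estimate I would write $g(\sigma)=\langle V,\Phi_\sigma V\rangle$, where $\Phi_\sigma$ is the self-adjoint operator with kernel $E(\sigma,y,z)h(y,z)$; on compact $M$ one has $\|\Phi_\sigma\|_{L^2\to L^2}\le C$ uniformly in $\sigma\in(0,1]$, which is the case $l=0$. For $l\ge1$ one differentiates under the integral and uses the identity $(\partial_\sigma-\Delta_y)E=\sigma^{-1}\psi\,E$, where $\psi\in C^\infty(M^2)$ and $\psi=O(d(y,z)^2)$ (an immediate consequence of $|\nabla_y d(y,z)|=1$ and $\Delta_y d(y,z)^2=2n+O(d(y,z)^2)$), together with the Gaussian bounds $d(y,z)^{2p}E(\sigma,y,z)\le C_p\sigma^p E(2\sigma,y,z)$ and $|\nabla_y^{\alpha}\nabla_z^{\beta}E(\sigma,y,z)|\le C_{\alpha\beta}\,\sigma^{-(|\alpha|+|\beta|)/2}E(2\sigma,y,z)$. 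Each $\partial_\sigma$ applied to $E$ produces a Laplacian $\Delta_y$ — or, by symmetry of $E$, $\Delta_z$ — modulo terms whose kernel is $E(2\sigma,\cdot)$ times a smooth function; distributing these Laplacians evenly between the two variables and integrating by parts in $y$ and $z$, one writes $g^{(l)}(\sigma)$ as a finite sum of integrals $\int_{M^2}b(\sigma,y,z)E(c\sigma,y,z)(\partial_y^{\gamma'}V)(y)(\partial_z^{\gamma''}V)(z)\,d\mu$ with $b$ smooth, $|\gamma'|,|\gamma''|\le l$, and $|b(\sigma,\cdot)E(c\sigma,\cdot)|\le C\,E(2\sigma,\cdot)$; each such integral is at most $C\|\partial^{\gamma'}V\|_{L^2}\|\partial^{\gamma''}V\|_{L^2}\le C\|V\|_{H^l}^2$.

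The main obstacle is this last step, and specifically the demand that no copy of $V$ be differentiated more than $l$ times: a single $\partial_\sigma$ naively produces $\Delta_y$, i.e. two derivatives on one side, and the curvature errors $\sigma^{-1}\psi E$ are controlled only in the Gaussian-weighted sense $d^2E(\sigma)\lesssim\sigma E(2\sigma)$, not in operator norm. One must alternate $\Delta_y$ and $\Delta_z$ to keep the derivative counts on the two variables balanced, and integrate each error term by parts in the appropriate variable, so that after $l$ steps the leftover kernel is still bounded by $C\,E(2\sigma,\cdot)$ times a smooth function while each factor of $V$ carries at most $l$ derivatives. (Alternatively one could replace $E$ by the exact heat kernel via \eqref{kernelform}, reducing the $h\equiv1$ part to the spectral bound $\|(1+P_0)^{-l}P_0^{\,l}e^{-\sigma P_0}\|_{L^2\to L^2}\le1$, but the smooth factor $h=r_{k,j}$ still forces a local argument of the above kind.)
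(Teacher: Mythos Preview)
Your overall plan matches the paper's: expand $R$ via \eqref{Rform}, discard $R_N$ by Schur, reduce the $k\ge 1$ terms to a derivative bound on $G_{k,j}(\sigma)$ together with Lemma~\ref{lem:expansion}, and handle $k=0$ separately using the second-order vanishing of $r_{0,0}$. Two steps, however, do not go through as written.

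In the $k=0$ step you claim $r_{0,0}(y,z)=d(y,z)^2\rho(y,z)$ with $\rho\in C^\infty$. This is false in general. In normal coordinates $z$ based at $y$ one has $u_0-1=c\,R_{ij}(y)z^iz^j+O(|z|^3)$ with $R_{ij}$ the Ricci tensor, so the Hessian of $r_{0,0}=u_0(u_0-1)$ at the diagonal is a constant multiple of $\mathrm{Ric}$, not of the metric; unless $(M,\g)$ is Einstein the quotient $r_{0,0}/d^2$ is bounded but not even continuous at the diagonal, and your scalar identity $d^2E=2n\sigma E+4\sigma^2\partial_\sigma E$ cannot be applied. The paper instead writes $r_{0,0}(x,\exp_x z)$ via Hadamard as a sum of $z^iz^j$ times smooth functions, then uses the matrix identity of Lemma~\ref{quadexpident},
\[
\langle A z,z\rangle\, e^{-|z|^2/4s}=2s\,\trace(A)\,e^{-|z|^2/4s}+4s^2\langle A\partial_z,\partial_z\rangle\,e^{-|z|^2/4s},
\]
and integrates the second term by parts to place one derivative on each factor of $V$.

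In the key estimate, ``alternating $\Delta_y$ and $\Delta_z$'' does not yield the balanced count $|\gamma'|,|\gamma''|\le l$ you need: a single $\partial_\sigma$ contributes one full Laplacian, which after integration by parts lands \emph{two} derivatives on one factor of $V$. Already for $l=1$ the term $\int E\,h\,(\Delta V)(y)\,V(z)$ is not bounded by $\|V\|_{H^1}^2$, and for odd $l$ your scheme places $l+1$ derivatives on one side. What is missing is a mechanism that splits each Laplacian into one derivative on each copy of $V$. The paper obtains it by passing to coordinates $(x,z)$ with $y=\exp_x(z)$, so that $E$ becomes \emph{exactly} the Euclidean kernel $(4\pi s)^{-n/2}e^{-|z|^2/4s}$ in $z$ and $\partial_t$ becomes $v(1-v)\Delta_z$ with no curvature remainder; the crucial Lemma~\ref{utox} then rewrites each $\partial_{z_j}$ acting on $V(\exp_x z)$ as a first-order $\partial_x$-operator, and integration by parts in $x$ transfers that derivative to the factor $V(x)$. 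This is the step your sketch is missing, and it is also what handles the $\langle A\partial_z,\partial_z\rangle$ term in the $k=0$ analysis.
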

A simple induction argument shows that Theorem \ref{thm:traceW2} is a consequence of Proposition \ref{prop:traceR} and the following.

\begin{proposition}\label{prop:traceH2}
If $V\in L^\infty_{\comp}\cap H^m(M)$ is real valued, then one can write
\begin{multline}\label{eqn:traceH2}
\int_0^1\int_{M^2}
H_0(v(1-v)t,y,z)\,V(y)\,V(z)
\,d\mu(y)\,d\mu(z)\,dv\\
=a_0 +\cdots+a_mt^m+\epsilon(t)t^m,
\end{multline}
where $\lim_{t\rightarrow 0^+}\epsilon(t)=0.$

Conversely, assuming $V\in L^\infty_{\comp}\cap H^{m-1}(M)$ is real valued, if one has
\begin{multline}\label{eqn:traceH2'}
\int_0^1\int_{M^2}
H_0(v(1-v)t,y,z)\,V(y)\,V(z)
\,d\mu(y)\,d\mu(z)\,dv\\
=a_0 +\cdots+a_{m-1}t^{m-1}+r_m(t)t^m,
\end{multline}
where $|r_m(t)|\le C_m$ for $0<t\le 1$, then $V\in H^{m}(M)$, and hence \eqref{eqn:traceH2} holds.
\end{proposition}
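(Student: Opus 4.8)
The plan is to diagonalize on the compact manifold $M$. Let $\{\phi_j\}_{j\ge 0}$ be an orthonormal basis of $L^2(M)$ consisting of real-valued eigenfunctions of $-\Delta_\g$, with eigenvalues $0=\lambda_0\le\lambda_1\le\cdots$, and put $\hat V(j)=\int_M V\phi_j\,d\mu$; recall that for an integer $s\ge 0$ one has $V\in H^s(M)$ iff $\sum_j(1+\lambda_j)^s|\hat V(j)|^2<\infty$. Writing $F(t)$ for the common left-hand side of \eqref{eqn:traceH2} and \eqref{eqn:traceH2'}, the spectral theorem gives $\int_{M^2}H_0(s,y,z)\,V(y)\,V(z)\,d\mu(y)\,d\mu(z)=\sum_j e^{-s\lambda_j}|\hat V(j)|^2$, so Tonelli's theorem (all summands are nonnegative) yields
$$
F(t)=\sum_j|\hat V(j)|^2\,g(t\lambda_j),\qquad g(\tau):=\int_0^1 e^{-v(1-v)\tau}\,dv.
$$
I will use three elementary facts about $g\in C^\infty\bigl([0,\infty)\bigr)$: first, $0<g\le 1$; second, differentiating under the integral, $(-1)^k g^{(k)}(\tau)=\int_0^1\bigl(v(1-v)\bigr)^k e^{-v(1-v)\tau}\,dv>0$ for all $k\ge 0$ and $\tau\ge 0$, so the Taylor coefficients $b_k:=g^{(k)}(0)/k!$ all satisfy $(-1)^k b_k>0$; third, setting $h_k(\tau):=g(\tau)-\sum_{l=0}^k b_l\tau^l$, Taylor's theorem for $\tau\le 1$ and the bound $|g|\le 1$ for $\tau\ge 1$ give $|h_k(\tau)|\le C_k\,\tau^{k+1}(1+\tau)^{-1}$ for all $\tau\ge 0$.

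For the forward implication I would write $g(t\lambda_j)=\sum_{l=0}^m b_l(t\lambda_j)^l+h_m(t\lambda_j)$ and sum over $j$. Since $V\in H^m(M)$ and $\lambda_j^l\le 1+\lambda_j^m$ for $l\le m$, the polynomial part converges and contributes $\sum_{l=0}^m a_l t^l$ with $a_l=b_l\sum_j\lambda_j^l|\hat V(j)|^2$. For the tail,
$$
\Bigl|\,\sum_j|\hat V(j)|^2\,h_m(t\lambda_j)\,\Bigr|\le C_m\,t^m\sum_j|\hat V(j)|^2\,\frac{t\,\lambda_j^{m+1}}{1+t\lambda_j},
$$
and each summand on the right tends to $0$ as $t\to 0^+$ while being dominated by the summable quantity $\lambda_j^m|\hat V(j)|^2$; dominated convergence makes the sum $o(1)$, so the tail is $\epsilon(t)t^m$ with $\epsilon(t)\to 0$, which is \eqref{eqn:traceH2}.

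For the converse I would first apply the forward implication with $m-1$ in place of $m$ (valid since $V\in H^{m-1}(M)$) to get $F(t)=\sum_{l=0}^{m-1}\bigl(b_l\sum_j\lambda_j^l|\hat V(j)|^2\bigr)t^l+o(t^{m-1})$; comparing with \eqref{eqn:traceH2'}, where $r_m(t)t^m=o(t^{m-1})$, and using uniqueness of the coefficients of an asymptotic expansion forces $a_l=b_l\sum_j\lambda_j^l|\hat V(j)|^2$ for $0\le l\le m-1$. Hence, with $\tilde h:=h_{m-1}$,
$$
\sum_j|\hat V(j)|^2\,\tilde h(t\lambda_j)=F(t)-\sum_{l=0}^{m-1}a_l t^l=r_m(t)\,t^m,\qquad |r_m(t)|\le C_m.
$$
The crux is a sign-definiteness property of $\tilde h$: since $\tilde h^{(l)}(0)=0$ for $l\le m-1$ and $\tilde h^{(m)}=g^{(m)}$, Taylor's formula with integral remainder gives $(-1)^m\tilde h(\tau)=\int_0^\tau\frac{(\tau-s)^{m-1}}{(m-1)!}\,(-1)^m g^{(m)}(s)\,ds\ge 0$ for all $\tau\ge 0$, and since $b_m\ne 0$ one has $(-1)^m\tilde h(\tau)=|b_m|\tau^m\bigl(1+o(1)\bigr)$ near $0$, so $(-1)^m\tilde h(\tau)\ge c_0\tau^m$ on some $[0,\tau_0]$. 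Discarding the nonnegative terms with $t\lambda_j>\tau_0$ then gives, for $0<t\le 1$,
$$
C_m\,t^m\ \ge\ (-1)^m r_m(t)\,t^m\ =\ \sum_j|\hat V(j)|^2\,(-1)^m\tilde h(t\lambda_j)\ \ge\ c_0\,t^m\!\!\sum_{\lambda_j\le\tau_0/t}\!\!\lambda_j^m|\hat V(j)|^2,
$$
so $\sum_{\lambda_j\le\tau_0/t}\lambda_j^m|\hat V(j)|^2\le C_m/c_0$ for every $t\in(0,1]$; letting $t\to 0^+$ shows $\sum_j\lambda_j^m|\hat V(j)|^2<\infty$, i.e. $V\in H^m(M)$, and then \eqref{eqn:traceH2} follows from the forward implication. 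The step I expect to be the main obstacle is precisely isolating and proving this positivity of $\tilde h$: it is what turns the mere \emph{boundedness} of $r_m(t)$ into genuine two-sided control and lets the argument bootstrap from $H^{m-1}$ to $H^m$, whereas the polynomial terms and the remainder estimate $h_m=O\bigl(\tau^{m+1}(1+\tau)^{-1}\bigr)$ are routine.
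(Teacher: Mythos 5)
Your proof is correct and takes essentially the same approach as the paper. Both proofs expand $V$ in a basis of Laplace eigenfunctions, reduce the trace integral to $\sum_j |\hat V(j)|^2$ weighted by a kernel in $t\lambda_j$, and hinge on the sign-definiteness of the Taylor remainder of the exponential (the paper's $0\le e_m(s)\le 1$ is exactly your $(-1)^m\tilde h(\tau)\ge 0$ after integrating in $v$); the only cosmetic difference is that you pre-integrate to form $g(\tau)=\int_0^1 e^{-v(1-v)\tau}\,dv$ and discard tail terms directly, while the paper keeps the $v$-integral explicit and invokes Fatou's lemma at the end, which is the same estimate in different clothing.
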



\subsection{Proof of Proposition \ref{prop:traceH2}}

Since $M$ is compact, we can expand $V$ in a basis of eigenfunctions $V=\sum_{j=1}^\infty b_j\phi_j$, where $-\Delta_\g \phi_j=\rho_j\phi_j$, and $\rho_j\ge 0$. Since $V$ is real-valued, the left hand side of \eqref{eqn:traceH2} equals
$$
\int_0^1\sum_{j=1}^\infty e^{-v(1-v)t\rho_j}\,|b_j|^2\,dv.
$$
We use the equivalence
$$
\|V\|_{H^m}^2\approx \sum_{j=1}^\infty(1+\rho_j^{m})\,|b_j|^2.
$$

Consider $m=1$, and suppose that the expansion \eqref{eqn:traceH2'} holds, hence that
$$
\int_0^1\sum_{j=1}^\infty e^{-v(1-v)t\rho_j}\,|b_j|^2\,dv=a_0+r_1(t)t.
$$
Letting $t\rightarrow 0$ gives $a_0=\sum|b_j|^2=\|V\|_{L^2}^2<\infty$, so we can rewrite this as
$$
\int_0^1\sum_{j=1}^\infty \biggl(\frac{1-e^{-v(1-v)t\rho_j}}{t}\biggr)\,|b_j|^2\,dv \le |r_1(t)|, \quad t\in(0,1].
$$
The integrand is positive, so applying Fatou's lemma as $t\rightarrow 0$ we get
$$
\biggl(\int_0^1 v(1-v)\,dv\biggr)\sum_{j=1}^\infty\rho_j\,|b_j|^2  \le C_1,
$$
implying that $V\in H^1(M)$. Conversely, if $V\in H^1(M)$ we would get an expansion of the form \eqref{eqn:traceH2} with $m=1$ by dominated convergence.

To consider higher values of $m$, write
$$
e^{-s}\;=\;\sum_{j=0}^{m-1}\frac{(-1)^j}{j!}\,s^j\,+\,e_m(s)\,\frac{(-1)^m}{m!}\,s^m.
$$
Then
\begin{equation}\label{errorterm}
0\le e_m(s)\le 1\;\;\;\text{if}\;\;\;s\ge 0,\qquad
\lim_{s\rightarrow 0}\,e_m(s)=1.
\end{equation}
The proof of \eqref{eqn:traceH2} for $V\in L^\infty\cap H^m(M)$ follows by dominated convergence.

Suppose then that $V\in L^\infty\cap H^{m-1}(M)$ for some $m\ge 1$, and that \eqref{eqn:traceH2'} holds. By induction, or comparison with \eqref{eqn:traceH2}, we must have
$$
a_k=\biggl(\frac {(-1)^k}{k!}\int_0^1 v^k(1-v)^k\,dv\biggr)
\sum_{j=1}^\infty \rho_j^{k}\,|b_j|^2,\qquad 0\le k\le m-1.
$$
We can then expand
\begin{multline*}
\int_0^1 \sum_{j=1}^\infty e^{-v(1-v)t\rho_j}\,|b_j|^2\,dv=\\
\sum_{k=0}^{m-1}a_k\,t^k
+\frac{(-1)^m}{m!}\biggl(\int_0^1\sum_{j=1}^\infty e_m\bigl(v(1-v)t\rho_j\bigr)\,v^m(1-v)^m\,\rho_j^{m}\,|b_j|^2\,dv\biggr)\,t^m.
\end{multline*}
We thus must have uniform bounds for $t\in(0,1]$
$$
\int_0^1\sum_{j=1}^\infty e_m\bigl(v(1-v)t\rho_j\bigr)\,v^m(1-v)^m\,\rho_j^{m}\,|b_j|^2\,dv \le m!\ts C_m.
$$
By Fatou's lemma and \eqref{errorterm}, we deduce that
$$
\biggl(\int_0^1 v^m(1-v)^m\,dv\biggr)\,
\sum_{j=1}^\infty\,\rho_j^{m}\,|b_j|^2\le m!\ts C_m,
$$
so necessarily $V\in H^m(M)$, completing the proof of Proposition \ref{prop:traceH2}. 
\qed


\subsection{Proof of Proposition \ref{prop:traceR}}

We use the expansion \eqref{Rform}. By the Schur test and \eqref{RNform}, we can bound
$$
\biggl|\;\int_0^1\int_{M^2}
R_N(t,v,y,z)\,V(y)\,V(z)
\,d\mu(y)\,d\mu(z)\,dv\;\biggr|\le C_{N,\Omega}\,t^{N+1}\,\|V\|_{L^2}^2.
$$
Taking $N=m$, it suffices to establish the expansion in \eqref{eqn:traceR} for each of the other terms in \eqref{Rform}.
Other than the term $k=0$, this is handled by the following.

\begin{lemma}\label{lem:highterms}
Suppose that $V\in L^\infty\cap H^{m-1}(M)$, $M$ compact, and that $r(x,y)\in C^\infty(M^2)$ is supported in $d(x,y)<c$. Then, one can write
\begin{multline*}
\int_0^1\int_{M^2}
E(v(1-v)t,x,y)\,r(x,y)\,V(y)\,V(x)
\,d\mu(y)\,d\mu(x)\,dv\\
=a_0+a_1 t+\cdots+a_{m-2}t^{m-2}+r_{m-1}(t)t^{m-1},
\end{multline*}
where, for constants $C_j$,
$$
|a_j|\le C_j\,\|V\|_{H^j}^2,\qquad
\sup_{0<t<1}|r_{m-1}(t)|\le C_{m-1}\,\|V\|_{H^{m-1}}^2.
$$
\end{lemma}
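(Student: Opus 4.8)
The plan is to reduce the double integral over $M^2$ to something expressed in local coordinates around the diagonal, and then perform a small-$t$ expansion of the Gaussian factor $E(v(1-v)t,x,y)$ in powers of $t$, with each coefficient a bilinear expression in $V$ controlled by a Sobolev norm. Since $r(x,y)$ is supported in $\{d(x,y)<c\}$ with $c$ less than the injectivity radius, I would use a partition of unity subordinate to coordinate charts and work with the representation $E(v(1-v)t,x,y)=(4\pi v(1-v)t)^{-n/2}e^{-d(x,y)^2/(4v(1-v)t)}$, treating $x$ as the base point and $y$ in geodesic normal coordinates centered at $x$, so that $d(x,y)^2=|\xi|^2$ where $\xi$ are the coordinates of $y$. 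After substituting $\xi=\sqrt{v(1-v)t}\,\eta$, the integral becomes $(4\pi)^{-n/2}\int_0^1\int \int e^{-|\eta|^2/4}\,r(x,x+\sqrt{v(1-v)t}\,\eta)\,V(x+\sqrt{v(1-v)t}\,\eta)\,V(x)\,J(x,\sqrt{v(1-v)t}\,\eta)\,d\eta\,d\mu(x)\,dv$, where $J$ is the smooth volume Jacobian.

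The key point is that this now has the structure of a convolution-type operator applied to $V$, integrated against $V$: writing $s=v(1-v)t$, the inner expression is $\langle V, K_s V\rangle_{L^2}$ up to smooth cutoffs, where $K_s$ is an approximate identity at scale $\sqrt s$. Expanding $r(x,x+\sqrt s\,\eta)V(x+\sqrt s\,\eta)J(x,\sqrt s\,\eta)$ in a Taylor series in $\sqrt s$ — here I must be careful since $V$ is only $H^{m-1}$, so I should instead expand only the smooth factors $r$ and $J$ in $\sqrt s$, and handle the translate of $V$ spectrally. Concretely, on the compact manifold I expand $V=\sum_j b_j\phi_j$ as in the proof of Proposition \ref{prop:traceH2}; the quadratic form $\langle V, K_s V\rangle$ then becomes a sum over eigenvalue pairs, and after integrating the Gaussian one obtains an expansion in powers of $s\rho$ of the form $\sum_{\ell=0}^{m-2}c_\ell s^\ell\langle V,(-\Delta_\g)^\ell V\rangle + (\text{remainder})\cdot s^{m-1}$, using the Taylor expansion of $e^{-s\rho/\text{const}}$-type factors together with the moments of the Gaussian. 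Each term $\langle V,(-\Delta_\g)^\ell V\rangle$ is comparable to $\|V\|_{H^\ell}^2$ for $\ell\le m-1$, which gives exactly the claimed bounds. Then I integrate in $v$ over $(0,1)$: since the expansion is in powers of $s=v(1-v)t$, and $\int_0^1 (v(1-v))^\ell\,dv$ is a finite positive constant, this produces the expansion in powers of $t$ with coefficients $a_j$, and the remainder estimate follows by a Fatou/dominated-convergence argument exactly parallel to the one used for $e_m(s)$ in Proposition \ref{prop:traceH2}.

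The main obstacle I expect is the interplay between the low regularity of $V$ (only $H^{m-1}$) and the need for an expansion of order $m-1$: one cannot naively Taylor-expand $V$ itself, so the argument must route the $t$-expansion entirely through the smooth kernel factors ($r$, the Jacobian $J$, and the normal-coordinate representation of $d(x,y)^2$, whose own expansion $d(x,y)^2=|\xi|^2+O(|\xi|^4)$ contributes further smooth corrections) while the "derivative loss" on $V$ is absorbed into the $H^\ell$ norms via the spectral decomposition. A secondary technical point is uniformity: I need the error term $r_{m-1}(t)$ bounded uniformly on $(0,1)$ by $\|V\|_{H^{m-1}}^2$, which requires the Gaussian tails to control the region $|\eta|$ large (where the Taylor remainder of the smooth factors is only polynomially bounded) — this is where the rapid decay of $e^{-|\eta|^2/4}$ and the boundedness of all relevant smooth functions on the compact $M$ do the work. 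Once these bookkeeping issues are handled, the $k=0$ term of \eqref{Rform} left out of this lemma is dealt with separately (its leading coefficient $r_{0,0}=u_0^2-u_0$ is designed so that combined with Proposition \ref{prop:traceH2} the whole expansion closes), completing the proof of Proposition \ref{prop:traceR}.
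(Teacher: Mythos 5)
The key step in your proposal — ``the quadratic form $\langle V,K_sV\rangle$ then becomes a sum over eigenvalue pairs, and after integrating the Gaussian one obtains an expansion in powers of $s\rho$ of the form $\sum_\ell c_\ell s^\ell\langle V,(-\Delta_\g)^\ell V\rangle$'' — does not go through as stated, and this is a genuine gap. That spectral argument works in Proposition~\ref{prop:traceH2} precisely because the kernel there is the actual heat kernel $H_0(v(1-v)t,\cdot,\cdot)$, which is diagonal in the eigenbasis of $-\Delta_\g$, so $\langle V,H_0 V\rangle=\sum_j e^{-s\rho_j}|b_j|^2$ and you can Taylor-expand $e^{-s\rho_j}$ termwise. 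Here the kernel is $E(v(1-v)t,x,y)\,r(x,y)$, with $r$ an essentially arbitrary smooth cutoff supported near the diagonal. This operator is \emph{not} a function of $-\Delta_\g$, so writing $V=\sum_j b_j\phi_j$ produces a double sum $\sum_{j,k}b_j\overline{b_k}\int E(s,x,y)r(x,y)\phi_j(y)\overline{\phi_k(x)}$ with genuine off-diagonal contributions that depend on $r$ in an uncontrolled way; there is no clean reduction to powers of $\rho$. Your instinct that the expansion must come from the smooth factors rather than from Taylor-expanding $V$ itself is correct, but the spectral route does not supply a mechanism for converting the $\sqrt{s}$-scale averaging of $V$ into $H^\ell$ norms when the kernel has explicit $(x,y)$-dependence beyond the Gaussian.

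The paper's proof sidesteps this entirely by working in real space: after putting $y=\exp_x(z)$, it observes that $\partial_t$ applied to the Gaussian $e^{-|z|^2/4v(1-v)t}$ is, after integrating by parts, the same as applying $v(1-v)\Delta_z$ to $r(x,z)V(\exp_x(z))$. The resulting $z$-derivatives on $V(\exp_x(z))$ are then partially converted, via Lemma~\ref{utox} and integration by parts in $x$, into $x$-derivatives falling on the other copy $V(x)$ (or on $r$), so that after $j$ time-derivatives one has a sum of integrals with factors $\partial_z^\alpha V(\exp_x z)\,\partial_x^\beta V(x)$ with $|\alpha|,|\beta|\le j$. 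The Schur test on $E$ then gives $|\,f^{(j)}(t)\,|\lesssim\|V\|_{H^j}^2$ uniformly in $t$, and the finite Taylor expansion follows from Lemma~\ref{lem:expansion}. This is the balancing-of-derivatives device you would need to replicate, but in your framework it is hidden; a fix would be to drop the eigenfunction decomposition altogether and instead show directly that the $j$-th $t$-derivative of your rescaled integral is a bilinear form in $\partial^\alpha V$, $\partial^\beta V$ with $|\alpha|,|\beta|\le j$, which is essentially what the paper does.
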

\begin{proof}
For any $\delta>0$, the kernel $E(v(1-v)t,x,y)$ is smooth over $v\in[0,1]$ and $t\ge 0$ for $d(x,y)>\delta$, hence we can use a partition of unity to reduce to the case that $V$ is supported in a local coordinate neighborhood, over which we fix an orthornormal frame on $T(M)$.
Write $y=\exp_x(z)$, $z\in \R^n\equiv T_x(M)$ via the frame. We absorb the Jacobian factors $D\mu(y)/Dz$ and $D\mu(x)/Dx$ into the smooth function $r(x,z)$, supported in $|z|<c$, and consider
\begin{equation}\label{Rterm}
\int_0^1\int_{\R^{2n}}
\bigl(4\pi v(1-v)t\bigr)^{-\frac n2}\,e^{-\frac{|z|^2}{4v(1-v)t}}\,r(x,z)\,V(\exp_x(z))\,V(x)
\,dz\,dx\,dv.
\end{equation}

Applying $\partial_t$ to the integrand in \eqref{Rterm} is equivalent, after integrating by parts in $z$, to applying $v(1-v)\Delta_z$ to $r(x,z)V(\exp_x(z))$. Using the following lemma and integrating by parts in $x$, we can convert half of the $z$-derivatives falling on $V(\exp_x(z))$ into $x$-derivatives acting on either $r(x,z)$ or the other factor $V(x)$. 

\begin{lemma}\label{utox}
Given a local coordinate chart, and orthonormal frame on $T(M)$ over the chart,
then for $z\in\R^n$ with $|z|<c$, there are smooth first order differential operators $A_j(x,z,\partial_x)$ and $B_j(x,z,\partial_z)$, so that
\begin{align*}
\partial_{z_j}V(\exp_x(z))&=A_j(x,z,\partial_x)V(\exp_x(z)),\\
\partial_{x_j}V(\exp_x(z))&=B_j(x,z,\partial_z)V(\exp_x(z)).\rule{0pt}{15pt}
\end{align*}
\end{lemma}

This lets us express the $j$-th derivative with respect to $t$ of \eqref{Rterm} as a sum \begin{multline*}
\sum_{|\alpha|,|\beta|\le j}\int_0^1\int_{\R^{2n}}
\bigl(4\pi v(1-v)t\bigr)^{-\frac n2}\,e^{-\frac{|z|^2}{4v(1-v)t}}\,r_{\alpha,\beta}(v,x,z)\\
\times \partial_z^\alpha V(\exp_x(z))\,\partial_x^\beta V(x)
\,dz\,dx\,dv
\end{multline*}
where $r_{\alpha,\beta}(v,x,z)$ is a smooth function supported in $|z|<c$.
Changing variables back to $(x,y)\in M\times M$, we apply the
Schur test to $E(v(1-v)t,x,y)$ to bound this by $\|V\|_{H^j}^2$, with bounds uniform in $t$. The result now follows by Lemma \ref{lem:expansion}.
\end{proof}

To handle the remaining term $k=0$, we will use that
$$
\bigl|r_{0,0}(x,\exp_x(z))\bigr|\le C\,|z|^2.
$$
To see this, note that $r_{0,0}(x,x)=0$ and $r_{0,0}(x,y)=r_{0,0}(y,x)$ by \eqref{rkform}, which together imply that $\nabla_{y}r_{0,0}(x,y)|_{y=x}=0$. Taking a Taylor expansion of $r_{0,0}(x,\exp_x(z))$ about $z=0$ thus reduces matters to showing that, when $V\in L^\infty\cap H^{m-1}(M)$,
\begin{multline*}
\int_0^1\int_M\int_{\R^n}
\bigl(4\pi v(1-v)t\bigr)^{-\frac n2}\,\langle A(x)z,z\rangle\, e^{-\frac{|z|^2}{4v(1-v)t}}\\ \times r(x,z)\,V(\exp_x(z))\,V(x)
\,dz\,d\mu(x)\,dv\\
=a_1 t+\cdots+a_{m-1}t^{m-1}+r_m(t)t^m
\end{multline*}
with coefficients $a_j$ satisfying the bounds of Proposition \ref{prop:traceR}. Here $r(x,z)$ is assumed smooth and supported in $|z|<c$, and $A(x)$ is a smooth, symmetric matrix valued function of $x\in M$. We use the identity (see Lemma \ref{quadexpident} below)
$$
\langle A(x)z,z\rangle\, e^{-\frac{|z|^2}{4s}}=
s\,2\trace(A(x))\,e^{-\frac{|z|^2}{4s}}+4s^2\langle A(x)\partial_z,\partial_z\rangle\,e^{-\frac{|z|^2}{4s}}.
$$
With $s=v(1-v)t$, the first term on the right is handled by Lemma
\ref{lem:highterms}.  For the second term, we integrate by parts as above to distribute one derivative on each of the two factors of $V$, and apply Lemma \ref{lem:highterms} with $m-1$ replaced by $m-2$.
\qed


\section{Proof of Theorem \ref{thm:traceWk}}\label{sec:traceWk}
We will use the following version of the Gagliardo-Nirenberg inequalities. Recall that we assume $(M,\g)$ is a compact Riemannian manifold.
\begin{lemma}
Suppose that $m_j\le m$, and $\sum_{j=1}^k m_j=2m$. Then
$$
\bigl\|\prod_{j=1}^k \bigl|\nabla^{m_j}u_j\bigr|\,\bigr\|_{L^1}\le C\,
\Bigl(\;\sum_{j=1}^k \|u_j\|_{L^\infty}\Bigr)^{k-2}
\Bigl(\;\sum_{j=1}^k \|u_j\|_{H^m}\Bigr)^2.
$$
\end{lemma}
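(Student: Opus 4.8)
The statement is a multiplicative (Gagliardo--Nirenberg/H\"older) inequality, and the natural strategy is to interpolate each factor against its $L^\infty$ and $H^m$ norms and then combine via H\"older. The plan is to work locally: using a partition of unity subordinate to coordinate charts (legitimate since all the $u_j$ will in the end be supported in a fixed compact set, and the norms on $M$ are comparable to sums of norms in finitely many charts), it suffices to prove the estimate on $\R^n$ with $\nabla$ replaced by the flat gradient, up to error terms involving lower-order derivatives and the smooth frame coefficients, which are themselves controlled by the same kind of estimate with smaller $m$.

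\textbf{Key steps.} First I would record the scalar Gagliardo--Nirenberg interpolation inequality on a compact manifold: for $0\le \ell\le m$,
$$
\bigl\|\,|\nabla^\ell u|\,\bigr\|_{L^{p_\ell}}\le C\,\|u\|_{L^\infty}^{1-\ell/m}\,\|u\|_{H^m}^{\ell/m},\qquad \frac{1}{p_\ell}=\frac{\ell}{m}\cdot\frac12,
$$
i.e. $p_\ell=2m/\ell$ (with $p_0=\infty$ interpreted as the $L^\infty$ bound, and $p_m=2$ the plain $H^m$ bound). This is the standard interpolation inequality; on a compact manifold it follows from the Euclidean case via charts plus the usual elliptic/Sobolev machinery, or one can cite it directly. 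Second, apply this with $\ell=m_j$ to each factor: $\||\nabla^{m_j}u_j|\|_{L^{p_j}}\le C\|u_j\|_{L^\infty}^{1-m_j/m}\|u_j\|_{H^m}^{m_j/m}$ where $p_j=2m/m_j$. Third, check the H\"older exponents match: $\sum_{j=1}^k 1/p_j=\sum_j m_j/(2m)=2m/(2m)=1$, so H\"older's inequality gives
$$
\bigl\|\textstyle\prod_j |\nabla^{m_j}u_j|\bigr\|_{L^1}\le \prod_{j=1}^k \bigl\|\,|\nabla^{m_j}u_j|\,\bigr\|_{L^{p_j}}\le C\prod_{j=1}^k \|u_j\|_{L^\infty}^{1-m_j/m}\|u_j\|_{H^m}^{m_j/m}.
$$
Fourth, bound the right-hand side by the desired symmetric form: each factor is a product of a power $\|u_j\|_{L^\infty}$ and a power of $\|u_j\|_{H^m}$; since $\sum_j(1-m_j/m)=k-2$ and $\sum_j m_j/m=2$, the product $\prod_j \|u_j\|_{L^\infty}^{1-m_j/m}\|u_j\|_{H^m}^{m_j/m}$ is a weighted geometric mean of terms each of the form (product of $k-2$ of the $\|u_i\|_{L^\infty}$)$\times$(product of $2$ of the $\|u_i\|_{H^m}$), so by AM--GM it is dominated by $\bigl(\sum_j\|u_j\|_{L^\infty}\bigr)^{k-2}\bigl(\sum_j\|u_j\|_{H^m}\bigr)^2$. (Concretely: $a^\theta\le \max(a,1)\le 1+a$ type manipulations, or simply $\prod x_j^{\alpha_j}\le (\sum_j x_j)^{\sum\alpha_j}$ when $x_j\ge 0$, applied separately to the $L^\infty$ block and the $H^m$ block after using $\|u_j\|_{H^m}^{m_j/m}\le (\sum_i\|u_i\|_{H^m})^{m_j/m}$ and likewise for $L^\infty$.)

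\textbf{Main obstacle.} The only real subtlety is passing from the flat $\R^n$ Gagliardo--Nirenberg inequality to the covariant derivatives $\nabla^{m_j}$ on $(M,\g)$: expanding $\nabla^{m_j}u_j$ in a chart produces the top-order term $\partial^{m_j}u_j$ plus lower-order terms with smooth (Christoffel-type) coefficients, and one must absorb those lower-order contributions. This is routine but needs the interpolation inequality for all intermediate orders $\ell<m_j$ simultaneously, together with the fact that a product $\prod_j|\nabla^{\ell_j}u_j|$ with $\ell_j\le m_j$ and $\sum\ell_j\le 2m$ still satisfies the same bound (the exponents in the H\"older step then have $\sum 1/p_j\le 1$, so one uses $L^1(M)\hookleftarrow$ H\"older with a slack exponent, harmless since $M$ has finite volume). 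I would state this reduction explicitly and note that it only uses compactness of $M$ and smoothness of the metric; everything else is the interpolation-plus-H\"older computation above.
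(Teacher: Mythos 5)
Your proposal is correct and follows essentially the same route as the paper: reduce to Euclidean charts by a partition of unity, apply the Gagliardo--Nirenberg interpolation inequality $\|\nabla^{m_j}u_j\|_{L^{2m/m_j}}\le C\|u_j\|_{L^\infty}^{1-m_j/m}\|\nabla^m u_j\|_{L^2}^{m_j/m}$ to each factor, and combine with H\"older using $\sum_j m_j/(2m)=1$. The paper leaves the final rearrangement into the symmetric $(\sum\|u_j\|_{L^\infty})^{k-2}(\sum\|u_j\|_{H^m})^2$ form and the flat-vs-covariant bookkeeping implicit, whereas you spell them out, but the underlying argument is identical.
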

\begin{proof}
By a partition of unity we can work with smooth cutoffs of $u_j$ in local coordinates, with the standard gradient $\nabla$, and with the Sobolev space $H^m(\R^n)$.
We apply the following bound, see \cite[(3.17)]{pde}, 
where we assume $u\in L^\infty\cap H^m$,
$$
\|\nabla^{m_j}u_j\|_{L^{\frac {2m}{m_j}}}\le C\,
\|u_j\|_{L^\infty}^{1-\frac{m_j}m}\|\nabla^m u_j\|_{L^2}^{\frac{m_j}m}
$$
and use H\"older's inequality after taking the product over $j$.
\end{proof}

Recall the formula \eqref{traceform}.
For any $\delta>0$, the kernel $H_0(t,y,z)$ belongs to $C^\infty(\R_+\times M\times M)$ on the set $d(y,z)>\delta$, with uniform bounds over $t\in(0,1]$, and all derivatives vanish to infinite order at $t=0$. Hence, as in the proof of Lemma \ref{lem:highterms}, for a small $c>0$ to be chosen, we may restrict to the case that $V(y)$ is supported in the set $U=\{y:d(y,x_0)<c\}$ for some point $x_0$. 
From the expansion \eqref{kernelform}, it suffices to show that when $f(t)$ takes the form
\begin{multline*}
\int_{\!\Lambda^{k-1}\times M^k\rule{0pt}{16pt}}\!\!\!\!\!\!\!\!\!\!\!\!(4\pi t)^{-\frac{n(k-1)}2}\Bigl(\prod_{j=1}^k r_j^{-\frac n2}\Bigr)
e^{-\tfrac 1{4t}\bigl(r_k^{-1}d^2(y_1,y_k)+r_{k-1}^{-1}d^2(y_k,y_{k-1})+\,\cdots\, +r_1^{-1}d^2(y_2,y_1)\bigr)}\\
\times V_k(y_k)\,\cdots\,V_1(y_1)\,\phi(\bfy)\,d\mu(\bfy)\,d\bfr,
\end{multline*}
where $\bfy=(y_1,\ldots,y_k)$, and $\phi(\bfy)\in C_{\comp}^\infty(U^k)$, then if each $V_j\in L^\infty_{\comp}\cap H^m(M)$ we can write
$$
f(t)=a_0+a_1 t+\cdots +a_{m-1}t^{m-1}+r_m(t)\ts t^m,
$$
with bounds on $|a_j|$ and $\|r_m\|_{L^\infty}$ as in the statement of Theorem \ref{thm:traceWk}.

We fix local coordinates on $U$ to identify $y_1$ with $x\in \R^n$, and fix an orthonormal frame over $U$. For each $x\in U$, and $c$ sufficiently small, this induces exponential coordinates $e_x(u)\equiv\exp_x(u)$ on $U$, based at $x$.
We then set $y_j=e_x(u_j)$ for $2\le j\le k$, so that $(x,u_2,\ldots,u_k)$ are coordinates on the support of $\phi(\bfy)$.

After absorbing $d\mu(\bfy)/d\bfu\,dx$ into $\phi$, we express $f(t)$ as
\begin{multline*}
\int_{\Lambda^{k-1}}(4\pi t)^{-\frac{n(k-1)}2}\Bigl(\prod_{j=1}^k r_j^{-\frac n2}\Bigr)
\int_{\R^{nk}}
e^{-\tfrac 1{4t}D(x,\bfr,\bfu)}\\
\times V_k\bigl(e_x(u_k)\bigr)\,\cdots\,V_2\bigl(e_x(u_2)\bigr)\,V_1(x)\,\phi(x,\bfu)\,d\bfu\,dx\,d\bfr.
\end{multline*}
Here, $\supp(\phi)\subset \{|u_j|<2c\}$ for all $j$,
$\bfu=(u_2,\ldots,u_k)$, and
\begin{multline*}
D(x,\bfr,\bfu)=r_k^{-1}|u_k|^2+r_{k-1}^{-1}d^2(e_x(u_k),e_x(u_{k-1}))+\cdots\\
+r_2^{-1}d^2(e_x(u_3),e_x(u_2))+r_1^{-1}|u_2|^2.
\end{multline*}
For $u,v\in\R^n$ with $|u|,|v|<2c$, and some $C<\infty$,
$$
C^{-1}|u-v|^2\le d^2(e_x(u),e_x(v))\le C\,|u-v|^2.
$$
Consequently, by the analysis of \cite[(3.12)]{SmZw}, we have uniform bounds over $\bfr\in\Lambda^{k-1}$,
\begin{equation}\label{3.12}
(4\pi t)^{-\frac{n(k-1)}2}\Bigl(\prod_{j=1}^k r_j^{-\frac n2}\Bigr)
\int_{\R^{n(k-1)}}
\sup_{x\in M}\,e^{-\tfrac 1{4t}D(x,\bfr,\bfu)}\,d\bfu\le C.
\end{equation}
Hence,
\begin{equation}\label{fbound}
|f(t)|\le C\,\sup_{\bfu}\int |V_k\bigl(e_x(u_k)\bigr)\,\cdots\,V_2\bigl(e_x(u_2)\bigr)\,V_1(x)\,\phi(x,\bfu)|\,dx.
\end{equation}
Taking $c$ smaller if necessary, the map $x\rightarrow e_x(u)$ is a diffeomorphism for $|u|<2c$, and so by H\"older's inequality we have bounds
$$
|f(t)|\,\le\, C\,\prod_{j=1}^k \|V_j\|_{L^{p_j}}\quad \text{if}\quad
\sum_{j=1}^k \, p_j^{-1}=1.
$$
This establishes the case $m=0$ of Theorem \ref{thm:traceWk}, taking $p_1=p_2=2$, and $p_j=\infty$ for $j\ge 3$.

To consider derivatives of $f(t)$, we observe that the symmetric function $d^2(e_x(u),e_x(v))$ vanishes to second order at $u=v$, and hence
$$
d^2(e_x(u),e_x(v))=\sum_{i,j=1}^nq_{ij}(x,u,v)(u^i-v^i)(u^j-v^j),
$$
with $q_{ij}(x,u,v)$ symmetric in $ij$, and depending smoothly on $x,u,v\in U$. Furthermore, $q_{ij}(x,0,0)=\delta_{ij}$, since $d^2(e_x(u),e_x(0))=|u|^2.$ Taking the Taylor expansion of $q_{ij}(x,u,v)$ in $u$ and $v$ lets us write
\begin{multline}\label{distform}
d^2(e_x(u),e_x(v))=|u-v|^2\,+\!\!\sum_{1\le|\alpha+\beta|<N}u^\alpha v^\beta Q_{\alpha\beta,x}(u-v)\\
+\!\!\sum_{|\alpha+\beta|=N}u^\alpha v^\beta R_{\alpha\beta,x}(u,v),
\end{multline}
where $Q_{\alpha\beta,x}$ are quadratic forms in $u-v$ that depend smoothly on $x$, and $R_{\alpha\beta,x}(u,v)$ is smooth in $(x,u,v)$ and satisfies
$R_{\alpha\beta,x}(u,v)\le C_{\alpha\beta}|u-v|^2$.

For $\bfr\in\Lambda^{k-1}$, let $Q_\bfr(\bfu)$ denote the quadratic form on  $\R^{n(k-1)}$,
$$
Q_\bfr(\bfu)=r_k^{-1}|u_k|^2+r_{k-1}^{-1}|u_k-u_{k-1}|^2+\,\cdots\,+r_2^{-1}|u_3-u_2|^2
+r_1^{-1}|u_2|^2.
$$
Then, for all $\bfr\in\Lambda^{k-1}$ and $x$, $\bfu$ in the support of $\phi(x,\bfu)$, for $c$ sufficiently small,
$$
\tfrac 12 Q_\bfr(\bfu)\le D(x,\bfr,\bfu)\le 2\,Q_\bfr(\bfu).
$$

Also, by the above we can write:
\begin{equation}\label{qexpansion}
D(x,\bfr,\bfu)=Q_\bfr(\bfu)\,+\!\!
\sum_{1\le|\alpha|<N}\bfu^\alpha Q_{\alpha,\bfr,x}(\bfu)+\!\!
\sum_{|\alpha|=N}\bfu^\alpha R_{\alpha,\bfr,x}(\bfu)\rule{0pt}{15pt}
\end{equation}
where $Q_{\alpha,\bfr,x}(\bfu)$ are quadratic forms, the $R_{\alpha,\bfr,x}(\bfu)$ are smooth functions, and where, with constants $C_{\alpha,\beta}$ uniform over $\bfr\in\Lambda^{k-1}$ and $x$, $\bfu\in U$,
\begin{equation}\label{dominate}
|\partial_x^\beta Q_{\alpha,\bfr,x}(\bfu)|\le C_{\alpha,\beta}\,Q_\bfr(\bfu),\qquad\quad
|\partial_x^\beta R_{\alpha,\bfr,x}(\bfu)|\le C_{\alpha,\beta}\,Q_\bfr(\bfu).
\end{equation}
The key point to the bounds \eqref{dominate} is that, although the various quadratic forms have singular behavior in $\bfr$, for $1<j<k$ the terms $Q_{\alpha\beta,x}(u_{j+1}-u_j)$ and $R_{\alpha\beta,x}(u_{j+1}-u_j)$  in \eqref{distform}, which multiply against $r_j^{-1}$, are dominated, as are their derivatives in $x$, by the corresponding term $|u_{j+1}-u_j|^2$ in $Q_\bfr(\bfu)$.

We next note the bound, uniformly over $\bfr\in\Lambda^{k-1}$,
\begin{equation}\label{prodbound}
(4\pi t)^{-\frac{n(k-1)}2}\Bigl(\prod_{j=1}^k r_j^{-\frac n2}\Bigr)
\int_{\R^{n(k-1)}}
\bigl|\bfu^\alpha\bigr|\, Q_\bfr(\bfu)^j\, e^{-\tfrac 1{4t}Q_\bfr(\bfu)}\,d\bfu\le C_{j,\alpha}\,t^{j+\frac 12|\alpha|},
\end{equation}
which is a simple variation on \eqref{3.12}, and the fact that $Q_\bfr(\bfu)\ge c\,|\bfu|^2$ for $\bfr\in\Lambda^{k-1}$. Since the estimate \eqref{prodbound} involves only absolute bounds, it also holds when the term $Q_\bfr(\bfu)^j$ is replaced by a $j$-fold product of quadratic forms $Q_{\alpha,\bfr,x}(\bfu)$ from \eqref{qexpansion}.

Consequently, if we expand $\exp\bigl(-\bigl(D(x,\bfr,\bfu)-Q_\bfr(\bfu)\bigr)/4t\bigr)$ as a power series, then for any given $N$ we can write $f(t)$, modulo $\bigO(t^N)$, as
\begin{multline*}
\int_{\Lambda^{k-1}}(4\pi t)^{-\frac{n(k-1)}2}\Bigl(\prod_{j=1}^k r_j^{-\frac n2}\Bigr)
\int_{\R^{nk}}
\sum_{|\alpha_1+\cdots+\alpha_L|\le 2N}\bfu^{\alpha_1+\cdots+\alpha_L}
\prod_{i=1}^L \biggl(\frac{Q_{\alpha_i,\bfr,x}(\bfu)}{4t}\biggr)\\
\times e^{-\tfrac 1{4t}Q_\bfr(\bfu)}
V_k\bigl(e_x(u_k)\bigr)\,\cdots\,V_2\bigl(e_x(u_2)\bigr)\,V_1(x)\,\phi_\alpha(x,\bfu)\,d\bfu\,dx\,d\bfr.
\end{multline*}
The term $\bfu^{\alpha_1+\cdots+\alpha_L}$ will be absorbed into $\phi_\alpha(x,\bfu)$, and estimates we prove will be uniform over $\bfr\in\Lambda^{k-1}$, so it suffices to prove the following
\begin{lemma}\label{lem:reduction} Suppose that $g(t)$ takes the form
\begin{multline}\label{gform}
g(t)=(4\pi t)^{-\frac{n(k-1)}2}\Bigl(\prod_{j=1}^k r_j^{-\frac n2}\Bigr)
\int_{\R^{nk}}\,\prod_{i=1}^L \biggl(\frac{Q_{\alpha_i,\bfr,x}(\bfu)}{4t}\biggr)\, 
e^{-\tfrac 1{4t}Q_\bfr(\bfu)}\\
\times V_k\bigl(e_x(u_k)\bigr)\,\cdots\,V_2\bigl(e_x(u_2)\bigr)\,V_1(x)\,\phi(x,\bfu)\,d\bfu\,dx
\end{multline}
where $V_j\in L^\infty\cap H^m(M)$, and $Q_{\alpha_i,\bfr,x}(\bfu)$ satisfies \eqref{dominate}.
Then for $t\in(0,1]$,
\begin{equation*}
g(t)=a_0+a_1 t+\cdots+a_{m-1}t^{m-1}+r_m(t)\ts t^m,
\end{equation*}
where
\begin{equation}\label{expansion}
\begin{split}
|a_i|&\le C_{k,m}\,\Bigl(\sum_{j=1}^k\|V_j\|_{L^\infty}\Bigr)^2
\Bigl(\sum_{j=1}^k\|V_j\|_{H^i}\Bigr)^{k-2},\\
\sup_{t\in(0, 1]}|r_m(t)|&\le\,C_{k,m}\, \Bigl(\sum_{j=1}^k\|V_j\|_{L^\infty}\Bigr)^2
\Bigl(\sum_{j=1}^k\|V_j\|_{H^m}\Bigr)^{k-2}.
\end{split}
\end{equation}
\end{lemma}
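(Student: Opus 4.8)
The plan is to show that $g\in C^\infty\bigl((0,1]\bigr)$ and that, for $0\le j\le m$,
$$
\sup_{0<t\le 1}\bigl|g^{(j)}(t)\bigr|\;\le\;C_{k,m}\Bigl(\sum_{l=1}^k\|V_l\|_{L^\infty}\Bigr)^{k-2}\Bigl(\sum_{l=1}^k\|V_l\|_{H^{j}}\Bigr)^2 ,
$$
after which Lemma~\ref{lem:expansion} gives \eqref{expansion}. So everything reduces to differentiating $g$ in $t$ and estimating, uniformly over $\bfr\in\Lambda^{k-1}$.

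The structural simplification I would use is the rescaling $\bfu=\sqrt t\,\mathbf w$, $\mathbf w\in\R^{n(k-1)}$. Since the exponential in \eqref{gform} is a Gaussian of variance $\propto t$ and each $Q_{\alpha_i,\bfr,x}$ is a quadratic form, after this substitution the prefactor $(4\pi t)^{-n(k-1)/2}\prod_j r_j^{-n/2}$ and the factors $Q_{\alpha_i,\bfr,x}(\bfu)/4t$ all become $t$--independent, and $g$ takes the form
$$
g(t)=\!\int (4\pi)^{-\frac{n(k-1)}2}\Bigl(\prod_j r_j^{-\frac n2}\Bigr)e^{-\frac14 Q_\bfr(\mathbf w)}\prod_i\tfrac{Q_{\alpha_i,\bfr,x}(\mathbf w)}{4}\prod_l V_l\bigl(e_x(\sqrt t\,w_l)\bigr)\,V_1(x)\,\phi(x,\sqrt t\,\mathbf w)\,d\mathbf w\,dx,
$$
whose only $t$--dependence sits in the arguments $e_x(\sqrt t\,w_l)$ and $\sqrt t\,\mathbf w$. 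Writing $\tau=\sqrt t$, the change of variables $\mathbf w\mapsto-\mathbf w$ shows this is even in $\tau$ (the forms $Q_\bfr$ and $Q_{\alpha_i,\bfr,x}$ are even, and $e_x\bigl((-\tau)(-w_l)\bigr)=e_x(\tau w_l)$, $\phi(x,(-\tau)(-\mathbf w))=\phi(x,\tau\mathbf w)$), so $g$ is a genuine function of $t=\tau^2$ — hence automatically expanded in integer powers of $t$, with no negative or half--integer terms — and it suffices to bound $\sup_{0<\tau\le 1}|\partial_\tau^{2j}g|$ for $0\le j\le m$.

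Differentiating in $\tau$: since the Gaussian and the quadratic factors are now $\tau$--independent, $\partial_\tau^{2j}$ falls only on the factors $V_l(e_x(\tau w_l))$ and on $\phi(x,\tau\mathbf w)$, and by the chain rule each $\partial_\tau$ on a $V_l$--factor produces a polynomial in $\mathbf w$ times a first--order derivative of $V_l$ at $e_x(\tau w_l)$ (no negative powers of $t$ arise); thus at most $2j\le 2m$ derivatives reach the $V_l$'s altogether. I would convert each $w_l$--derivative of $V_l(e_x(\tau w_l))$ into an $x$--derivative by Lemma~\ref{utox} — the distributional derivatives of $V_l\in H^m$ so produced being harmless once integrated against a smooth kernel in $x$ — and then redistribute, by integrating by parts in $x$ and using Lemma~\ref{utox} in both directions, so that no factor among $V_1(x),V_2(e_x(\tau w_2)),\dots,V_k(e_x(\tau w_k))$ carries more than $\min(j,m)=j$ derivatives; this is possible since at most $2j$ derivatives act on the $V$'s in total and there are $k\ge 2$ of them, and the redistribution spawns only terms in which strictly fewer derivatives act on the $V$'s, treated the same way. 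The upshot is that $\partial_\tau^{2j}g$ is a finite sum of terms
$$
\int (4\pi)^{-\frac{n(k-1)}2}\Bigl(\prod_j r_j^{-\frac n2}\Bigr)e^{-\frac14 Q_\bfr(\mathbf w)}\,P(\mathbf w)\prod_{l=1}^k(\partial^{\gamma_l}V_l)\bigl(e_x(\tau w_l)\bigr)\,\psi(x,\mathbf w)\,d\mathbf w\,dx ,
$$
with $\psi\in C_{\comp}^\infty$, $\sum_l|\gamma_l|\le 2j$, $|\gamma_l|\le j$, and $P$ a polynomial of bounded degree whose coefficients — together with their $x$--derivatives — are dominated by a power of $Q_\bfr(\mathbf w)$, via \eqref{dominate}.

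Finally, estimating such a term: bounding $|P(\mathbf w)|$ by a power of $Q_\bfr(\mathbf w)$ and using $Q_\bfr(\mathbf w)\ge c\,|\mathbf w|^2$ on the simplex, the $\mathbf w$--integral against the fixed Gaussian is bounded uniformly in $\bfr\in\Lambda^{k-1}$ — this is exactly \eqref{prodbound}, used just as \eqref{3.12}--\eqref{fbound} were used in the case $m=0$ — which reduces matters to $\sup_{\mathbf w}\int\prod_{l=1}^k\bigl|(\partial^{\gamma_l}V_l)(e_x(\tau w_l))\bigr|\,dx$. After the change of variables $x\mapsto e_x(\tau w_l)$ (a diffeomorphism on $\supp\psi$) and the Gagliardo--Nirenberg inequality recorded at the start of this section — with $m_l=|\gamma_l|\le j$, $\sum_l m_l\le 2j$, padded by $L^\infty$ norms when $\sum_l|\gamma_l|<2j$ — this is bounded by $C_{k,m}\bigl(\sum_l\|V_l\|_{L^\infty}\bigr)^{k-2}\bigl(\sum_l\|V_l\|_{H^{j}}\bigr)^2$, as required. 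The part I expect to be the main obstacle is the bookkeeping in the two middle steps — keeping at most $m$ derivatives on each $V_l$ throughout the differentiation, via the Lemma~\ref{utox} redistribution, while simultaneously keeping every estimate uniform in $\bfr$ as the forms $Q_{\alpha,\bfr,x}$ degenerate when some $r_j\to0$, for which \eqref{dominate} and the single bound $Q_\bfr(\mathbf w)\ge c\,|\mathbf w|^2$ are exactly what is needed; the rescaling makes the integer--power expansion and the absence of negative powers essentially free.
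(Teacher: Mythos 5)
Your route is genuinely different from the paper's, and in one important respect cleaner. The paper keeps the Gaussian $t$-dependent and exploits the identity $\partial_t e^{-Q_\bfr(\bfu)/4t}=\bigl(Q_\bfr^{-1}(\partial_\bfu)+\tfrac{n(k-1)}{2t}\bigr)e^{-Q_\bfr(\bfu)/4t}$ (Lemma \ref{quadexpident}), so that each $t$-derivative manufactures exactly two $\bfu$-derivatives; the price is a double induction on $(m,L)$ with a page of commutator bookkeeping to peel off the factors $Q_{\alpha_i,\bfr,x}(\bfu)/4t$ one at a time. Your substitution $\bfu=\sqrt t\,\mathbf w$ renders those factors and the Gaussian $t$-independent at a stroke, so the entire $L$-induction and the commutator analysis disappear; what remains — differentiating the compositions $V_l(e_x(\tau w_l))$, redistributing derivatives via Lemma \ref{utox} and integration by parts in $x$ (with \eqref{dominate} controlling the $x$-derivatives of the $Q_{\alpha_i,\bfr,x}$), and closing with \eqref{prodbound} plus Gagliardo--Nirenberg — is sound and matches the paper's treatment of the analogous steps. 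Two small points: the Gagliardo--Nirenberg lemma as stated requires $\sum_l m_l=2m$ exactly, so your ``padding'' remark should be made explicit (when only two factors carry derivatives, plain Cauchy--Schwarz already gives the bound); and differentiation under the integral sign for $V_l\in H^m$ needs the usual mollification remark, which you note.

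There is, however, one step whose justification as written is wrong, and it is precisely the step your rescaling introduces. You claim that since $g$ is even in $\tau=\sqrt t$, it is ``automatically expanded in integer powers of $t$, with no negative or half-integer terms.'' Evenness alone does not give this for a function that is only known to be smooth with bounded derivatives on $(0,1]$: the function $g(\tau)=|\tau|=\sqrt t$ is even in $\tau$, has all derivatives bounded on $(0,1]$, and its one-sided Taylor expansion at $\tau=0^+$ has a nonzero odd coefficient. Since you apply Lemma \ref{lem:expansion} in the variable $\tau$ up to order $2m$, you must separately prove that the odd-order coefficients $\tilde a_{2i+1}=\lim_{\tau\to 0^+}\partial_\tau^{2i+1}g(\tau)/(2i+1)!$ vanish; otherwise half-integer powers of $t$ survive and the lemma fails. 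The fix is available from the structure of your own computation: each $\partial_\tau$ produces exactly one explicit factor of $\mathbf w$, so $\partial_\tau^{2i+1}g$ is a sum of terms $\int e^{-\frac14 Q_\bfr(\mathbf w)}\,\mathbf w^\alpha\,F_\alpha(x,\tau\mathbf w)\,d\mathbf w\,dx$ with $|\alpha|=2i+1$ odd; letting $\tau\to 0^+$ using continuity of translation in $L^2$ (applicable since at most $m$ derivatives sit on each $V_l$ after redistribution), the limit is $\int e^{-\frac14 Q_\bfr(\mathbf w)}\,\mathbf w^\alpha\,d\mathbf w$ times an $x$-integral, and this vanishes because odd moments of the centered Gaussian vanish. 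This argument must be added; note that the paper's formulation sidesteps the issue entirely because $\partial_t$ acting on the unrescaled Gaussian is a second-order operator in $\bfu$ and so never produces odd powers of $\sqrt t$.
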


In what follows, given a matrix $B$ on $\R^{n(k-1)}$ we define the quadratic form $B(\bfu)=(B\bfu)\cdot\bfu$, and given a quadratic form $B(\bfu)$ let $B$ denote the symmetric matrix that determines it. It is useful to introduce the following notation comparing matrices, via their quadratic forms, to $Q_\bfr$ or $Q_\bfr^{-1}$.

\begin{definition}
Given a family of matrices $B_{\bfr,x}$ on $\R^{n(k-1)}$, depending on parameters $\bfr\in \Lambda^{k-1}$ and $x\in U$, we write $B_{\bfr,x}\lesssim Q_\bfr$ if there is a constant $C$ such that the associated family of quadratic forms satisfies
$$
|B_{\bfr,x}(\bfu)|\le C\, Q_\bfr(\bfu)\quad\text{for all}\quad \bfr\in \Lambda^{k-1},\;\;x\in U.
$$
\end{definition}

We then express \eqref{dominate} as $\partial_x^\beta Q_{\alpha,\bfr,x}\lesssim Q_\bfr$ and $\partial_x^\beta R_{\alpha,\bfr,x}\lesssim Q_\bfr$, for all $\beta$.
The following is an immediate consequence of the definition, where $Q_\bfr^{-1/2}$ is the positive definite square root of $Q_\bfr^{-1}$,
\begin{equation}\label{compbound}
A_{\bfr,x}\lesssim Q_\bfr\;\; \Leftrightarrow\;\; 
Q_\bfr^{-1/2}A_{\bfr,x}\ts Q_\bfr^{-1/2}\lesssim I \;\;\Leftrightarrow \;\;
Q_\bfr^{-1}A_{\bfr,x}\ts Q_\bfr^{-1}\lesssim Q_\bfr^{-1}.
\end{equation}

\begin{lemma}\label{quadexpident}
Suppose that $B$ is a symmetric matrix on $\R^{n(k-1)}$, and $B(\bfu)$ is the corresponding quadratic form in $\bfu$. Then
$$
B(\bfu)e^{-\tfrac 1{4t}Q_\bfr(\bfu)}=
\Bigl(4t^2 B'_\bfr(\partial_{\bfu})+2t\trace\bigl(Q_\bfr^{-1}B\bigr)\Bigr)e^{-\tfrac 1{4t}Q_\bfr(\bfu)}
$$
where $B'_\bfr=Q_\bfr^{-1}BQ_\bfr^{-1}$, with $Q_\bfr$ the symmetric matrix associated to $Q_\bfr(\bfu)$.
\end{lemma}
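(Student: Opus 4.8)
The plan is to verify this identity by a direct computation in which differential operators in $\bfu$ acting on the Gaussian $e^{-Q_\bfr(\bfu)/4t}$ reproduce polynomial factors. First I would recall the elementary one-variable fact, valid for any vector $\bfw$, that
\[
(\bfw\cdot\partial_\bfu)\,e^{-\tfrac1{4t}Q_\bfr(\bfu)}=-\tfrac1{2t}\,(Q_\bfr\bfu)\cdot\bfw\;e^{-\tfrac1{4t}Q_\bfr(\bfu)},
\]
which follows since $\partial_{\bfu}Q_\bfr(\bfu)=2Q_\bfr\bfu$ because $Q_\bfr$ is symmetric. Iterating this with two constant vector fields and keeping track of the term produced when the second derivative falls on the already-generated polynomial factor, one obtains, for symmetric $B'_\bfr$,
\[
B'_\bfr(\partial_\bfu)\,e^{-\tfrac1{4t}Q_\bfr(\bfu)}
=\Bigl(\tfrac1{4t^2}\,(Q_\bfr B'_\bfr Q_\bfr)(\bfu)-\tfrac1{2t}\,\trace(B'_\bfr Q_\bfr)\Bigr)e^{-\tfrac1{4t}Q_\bfr(\bfu)}.
\]

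Next I would substitute the claimed choice $B'_\bfr=Q_\bfr^{-1}BQ_\bfr^{-1}$, which is again symmetric since $B$ and $Q_\bfr^{-1}$ are. Then $Q_\bfr B'_\bfr Q_\bfr = Q_\bfr Q_\bfr^{-1} B Q_\bfr^{-1} Q_\bfr = B$, so the first term on the right becomes $(4t^2)^{-1}B(\bfu)$; and $\trace(B'_\bfr Q_\bfr)=\trace(Q_\bfr^{-1}BQ_\bfr^{-1}Q_\bfr)=\trace(Q_\bfr^{-1}B)$, using cyclicity of the trace. Multiplying the resulting identity through by $4t^2$ and rearranging gives
\[
B(\bfu)\,e^{-\tfrac1{4t}Q_\bfr(\bfu)}=\Bigl(4t^2 B'_\bfr(\partial_\bfu)+2t\,\trace(Q_\bfr^{-1}B)\Bigr)e^{-\tfrac1{4t}Q_\bfr(\bfu)},
\]
which is exactly the assertion.

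There is essentially no obstacle here: the statement is a purely algebraic Gaussian identity, and the only points requiring minimal care are (i) the symmetry of $Q_\bfr$, which is what makes $\partial_\bfu Q_\bfr(\bfu)=2Q_\bfr\bfu$ rather than $(Q_\bfr+Q_\bfr^{\!\top})\bfu$, (ii) the cross term arising when the second differentiation in $B'_\bfr(\partial_\bfu)$ hits the first-order polynomial factor already generated, which is precisely the source of the $\trace(Q_\bfr^{-1}B)$ term, and (iii) the invertibility of $Q_\bfr$, which holds since $Q_\bfr(\bfu)\ge c|\bfu|^2$ on $\Lambda^{k-1}$ as noted just before \eqref{prodbound}. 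One can phrase the whole computation coordinate-free, or equivalently diagonalize $Q_\bfr$ by a linear change of variables so that the $n(k-1)$ variables decouple and the identity reduces to the scalar Hermite-type relation $\xi^2 e^{-\xi^2/4t}=(4t^2\partial_\xi^2+2t)e^{-\xi^2/4t}$ summed against the entries of $B$; I would likely present the direct version since it is shortest.
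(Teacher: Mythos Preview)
Your argument is correct and is essentially the same as the paper's: both compute the action of a second-order constant-coefficient operator $A(\partial_\bfu)$ on $e^{-Q_\bfr(\bfu)/4t}$ to obtain $\bigl((4t^2)^{-1}(Q_\bfr A Q_\bfr)(\bfu)-(2t)^{-1}\trace(AQ_\bfr)\bigr)e^{-Q_\bfr(\bfu)/4t}$, then specialize to $A=Q_\bfr^{-1}BQ_\bfr^{-1}$ and rearrange. The only difference is cosmetic---the paper writes the second-derivative identity in coordinates first, whereas you iterate a first-derivative identity---so no further comment is needed.
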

\begin{proof}
Given a quadratic form $Q(v)$ with symmetric matrix $Q$ we have
$$
\partial_{v_i}\partial_{v_j}e^{-\tfrac 1{4t}Q(v)}=
\biggl(\frac{(Qv)_i(Qv)_j}{4t^2}-\frac{Q_{ij}}{2t}\biggr)e^{-\tfrac 1{4t}Q(v)},
$$
and hence for symmetric matrix $A$
$$
A(\partial_v)e^{-\tfrac 1{4t}Q(v)}=
\biggl(\frac{A(Qv)}{4t^2}-\frac{\trace(AQ)}{2t}\biggr)e^{-\tfrac 1{4t}Q(v)}.
$$
The statement of the lemma follows by taking $A=Q_\bfr^{-1}B Q_\bfr^{-1}$.
\end{proof}

\begin{proof}[Proof of Lemma \ref{lem:reduction}]
We now turn to the proof that \eqref{expansion} holds for the expression \eqref{gform}. The bounds on $a_j$ and $r_m(t)$ will follow from the proof. We divide consideration into cases.

\noindent{$\bullet$} $m=0,\;L\ge 0.\,$  This follows exactly as for the estimate \eqref{fbound} above, using \eqref{prodbound} instead of \eqref{3.12}.

\noindent{$\bullet$} $L=0,\;m\ge 1.\,$ We need to establish \eqref{expansion} for $g(t)$ of the form
\begin{multline*}
g(t)=\Bigl(\prod_{j=1}^k r_j^{-\frac n2}\Bigr)
\int_{\R^{nk}} (4\pi t)^{-\frac{n(k-1)}2}\,e^{-\tfrac 1{4t}Q_\bfr(\bfu)}\\
\times V_k\bigl(e_x(u_k)\bigr)\,\cdots\,V_2\bigl(e_x(u_2)\bigr)\,V_1(x)\,\phi(x,\bfu)\,d\bfu\,dx.
\end{multline*}
We proceed by induction on $m$, and assume the result holds at regularity $V_j\in H^{m-1}$.  We will show that when $t\in(0,1]$ and $V_j\in L^\infty_{\comp}\cap H^m(M)$ we can write
$$
g'(t)=a_1+a_2\,t+\cdots+a_{m-1} t^{m-2}+t^{m-1}r(t).
$$
This implies $g(t)$ is continuous on $0\le t\le 1$, and the expansion for $g(t)$ follows by integration.

The following identity is a simple consequence of Lemma \ref{quadexpident},
$$
\partial_te^{-\tfrac 1{4t}Q_\bfr(\bfu)}=
\Bigl(Q_\bfr^{-1}(\partial_\bfu)+\tfrac{n(k-1)}{2t}\Bigr)e^{-\tfrac 1{4t}Q_\bfr(\bfu)}.
$$
We apply this to the integrand for $g(t)$, and after integration by parts we see that $g'(t)$ equals the following:
\begin{multline*}
\Bigl(\prod_{j=1}^k r_j^{-\frac n2}\Bigr)
\int_{\R^{nk}} (4\pi t)^{-\frac{n(k-1)}2}\,e^{-\tfrac 1{4t}Q_\bfr(\bfu)}
\\
\times Q_\bfr^{-1}(\partial_\bfu)\Bigl(V_k\bigl(e_x(u_k)\bigr)\,
\cdots\,V_2\bigl(e_x(u_2)\bigr)\,V_1(x)\,\phi(x,\bfu)\Bigr)\,d\bfu\,dx
\end{multline*}
The coefficients of $Q_\bfr^{-1}$ are bounded by a fixed constant, uniformly over $\bfr\in\Lambda^{k-1}$, so we can replace $Q_\bfr^{-1}(\partial_\bfu)$ by a component of $\partial_{u_i}\partial_{u_j}$ for some $i,j$. If $i\ne j$, at most one derivative falls on a given $V_j\bigl(e_x(u_j)\bigr)$, leading to a $k$-fold product of $V_j$'s of regularity $H^{m-1}$. 
The desired expansion for $g'(t)$ follows from the induction hypothesis for regularity $m-1$.

When $i=j$, we need consider a term like
\begin{multline*}
\Bigl(\prod_{j=1}^k r_j^{-\frac n2}\Bigr)
\int_{\R^{nk}} (4\pi t)^{-\frac{n(k-1)}2}\,e^{-\tfrac 1{4t}Q_\bfr(\bfu)}\\
\times
\Bigl(\partial_{u_k}^{\ts 2}V_k\bigl(e_x(u_k)\bigr)\Bigr)\,\cdots\,V_2\bigl(e_x(u_2)\bigr)\,V_1(x)\,\phi(x,\bfu)\Bigr)\,d\bfu\,dx.
\end{multline*}
To handle this, we use Lemma \ref{utox} and integration by parts to convert one factor of 
$\partial_{u_k}$ into $\partial_x$ acting on a factor $V_j$ for $j\ne k$, and proceed as for the case $i\ne j$.

\noindent{$\bullet$} $L\ge 1,\;m\ge 0.\,$ We proceed by induction on $L$, and assume \eqref{expansion} holds for a term of the form \eqref{gform} with an $L-1$ fold product, for all integers $m\ge 0$.
We note that, by \eqref{dominate} and \eqref{compbound}, the function 
$$
\psi_{\bfr,L}(x)=\thf\trace\bigl(Q_\bfr^{-1}Q_{\bfr,\alpha_L,x}\bigr)=\thf\trace\bigl(Q_\bfr^{-1/2}Q_{\bfr,\alpha_L,x}Q_\bfr^{-1/2}\bigr)
$$
is a smooth function of $x$, with $|\partial^\alpha_x\psi_{\bfr,L}|$ uniformly bounded over $\bfr\in\Lambda^{k-1}$ and $x\in U$, for all $\alpha$.

Considering the expression \eqref{gform}, we use Lemma \ref{quadexpident} to write 
\begin{equation*}
\frac{Q_{\bfr,\alpha_L,x}(\bfu)}{4t}\,
e^{-\tfrac 1{4t}Q_\bfr(\bfu)}\\
=
\Bigl(t B_{\bfr,\alpha_L,x}(\partial_\bfu)+\psi_{\bfr,L}(x)\Bigr)e^{-\tfrac 1{4t}Q_\bfr(\bfu)}
\end{equation*}
where $B_{\bfr,\alpha_L,x}= Q_\bfr^{-1}Q_{\bfr,\alpha_L,x}Q_\bfr^{-1}$, hence $B_{\bfr,\alpha_L,x}\lesssim Q_\bfr^{-1}$ by \eqref{compbound}.
The smooth function $\psi_{\bfr,L}(x)$ can be absorbed into $\phi_\bfr(x,\bfu)$, which will denote a function in $C_{\comp}^\infty(U^k)$ with $C_{\comp}^\infty$ bounds in $(x,\bfu)$ that are uniform over $\bfr$. This term then leads to an $L-1$ fold product which is handled by the induction hypothesis.

We then need consider the commutators, for $1\le i<L$,
$$
\Bigl[Q_{\bfr,\alpha_i,x}(\bfu)\,,
B_{\bfr,\alpha_L,x}(\partial_\bfu)\Bigr]=-4\,\bfu\cdot Q_{\bfr,\alpha_i,x}
B_{\bfr,\alpha_L,x}\,\partial_\bfu-2\trace\bigl(Q_{\bfr,\alpha_i,x}
B_{\bfr,\alpha_L,x}\bigr).
$$
The trace term is a smooth, bounded function of $x$, uniformly over $r\in\Lambda^{k-1}$, since $B_{\bfr,\alpha_L,x}\lesssim Q_\bfr^{-1}$ and $Q_{\bfr,\alpha_i,x}\lesssim Q_\bfr$, and thus can be harmlessly absorbed into $\phi_\bfr(x,\bfu)$.
We also note the following,
$$
\Bigl[Q_{\bfr,\alpha_j,x}(\bfu)\,,\bfu\cdot Q_{\bfr,\alpha_i,x}
B_{\bfr,\alpha_L,x}\,\partial_\bfu\Bigr]=
-2\,\bfu\cdot Q_{\bfr,\alpha_i,x}B_{\bfr,\alpha_L,x}Q_{\bfr,\alpha_j,x}\bfu\,.
$$
The matrix $Q_{\bfr,\alpha_i,x}B_{\bfr,\alpha_L,x}Q_{\bfr,\alpha_j,x}$ is not necessarily symmetric, but the commutator involves only the symmetric part of this matrix. Additionally,
$$
\bigl|\bfu\cdot Q_{\bfr,\alpha_i,x}B_{\bfr,\alpha_L,x}Q_{\bfr,\alpha_j,x}\bfu\bigr|\le C Q_\bfr(\bfu)\,,
$$
and so the quadratic form behaves exactly like a term $Q_{\bfr,\alpha_j,x}(\bfu)$.
Also,
$$
\Bigl[\bfu\cdot Q_{\bfr,\alpha_i,x}B_{\bfr,\alpha_L,x}\,\partial_\bfu\,,B_{\bfr,\alpha_j,x}(\partial_\bfu)\Bigr]=-2\,\partial_\bfu\cdot B_{\bfr,\alpha_j,x}Q_{\bfr,\alpha_i,x}B_{\bfr,\alpha_L,x}\partial_\bfu\,,
$$
and the symmetric part of $B_{\bfr,\alpha_j,x}Q_{\bfr,\alpha_i,x}B_{\bfr,\alpha_L,x}$ is dominated by $Q_\bfr^{-1}$. Thus, for the purposes of estimates, the operator $\bfu\cdot Q_{\bfr,\alpha_i,x}B_{\bfr,\alpha_L,x}\,\partial_\bfu$ commutes with both $Q_{\bfr,\alpha_j,x}(\bfu)$ and $B_{\bfr,\alpha_j,x}(\partial_\bfu)$. Finally,
$$
\bfu\cdot Q_{\bfr,\alpha_i,x}B_{\bfr,\alpha_L,x}\,\partial_\bfu e^{-\frac{Q_\bfr(\bfu)}{4t}}=
-\frac{\bigl(\bfu\cdot Q_{\bfr,\alpha_i,x}B_{\bfr,\alpha_L,x}Q_\bfr\bfu\bigr)}{2t}\,
e^{-\frac{Q_\bfr(\bfu)}{4t}}\,,
$$
which behaves the same as multiplying by the factor $Q_{\bfr,\alpha_i,x}(\bfu)/4t$.

The end result is that we can write, up to inconsequential modifications of the $Q_{\bfr,\alpha_i,x}$,
\begin{multline*}
\prod_{i=1}^L \biggl(\frac{Q_{\bfr,\alpha_i,x}(\bfu)}{4t}\biggr)\, 
e^{-\tfrac 1{4t}Q_\bfr(\bfu)}=
t B_{\bfr,\alpha_L,x}(\partial_\bfu)\prod_{i=1}^{L-1} \biggl(\frac{Q_{\bfr,\alpha_i,x}(\bfu)}{4t}\biggr)e^{-\tfrac 1{4t}Q_\bfr(\bfu)}\\
+\phi_\bfr(\bfu,x)\prod_{i=1}^{L-1} \biggl(\frac{Q_{\bfr,\alpha_i,x}(\bfu)}{4t}\biggr)e^{-\tfrac 1{4t}Q_\bfr(\bfu)}\,.
\end{multline*}

We apply this identity to the integrand of \eqref{gform}.
The second term on the right hand side
(which is more precisely a sum of such terms) is handled by the induction hypothesis in $L$, so we continue with just the first term on the right. We integrate by parts in $\bfu$ to move the $B_{\bfr,\alpha_L,x}(\partial_\bfu)$ to act on $V\bigl(e_x(u_k)\bigr)\,\cdots\,V\bigl(e_x(u_2)\bigr)\,V(x)\,\phi_\bfr(x,\bfu)$. At this point, the only estimate we use on $B_{\bfr,\alpha_L,x}$ is that it is a bounded matrix, together with all derivatives in $x$, which follows since $B_{\bfr,\alpha_L,x}\lesssim Q_\bfr^{-1}\lesssim I\,,$ similarly for its derivatives in $x$. Thus, the coefficients of $B_{\bfr,\alpha_L,x}$ can be absorbed into $\phi_\bfr(x,\bfu)$, leading to the term
\begin{multline*}
t\,(4\pi t)^{-\frac{n(k-1)}2}\Bigl(\prod_{j=1}^k r_j^{-\frac n2}\Bigr)
\int_{\R^{nk}}\,\prod_{i=1}^{L-1} \biggl(\frac{Q_{\bfr,\alpha_i,x}(\bfu)}{4t}\biggr)\, 
e^{-\tfrac 1{4t}Q_\bfr(\bfu)}\\
\times \partial_\bfu^{\ts 2}\Bigl(V\bigl(e_x(u_k)\bigr)\,\cdots\,V\bigl(e_x(u_2)\bigr)\,V(x)\,\phi_\bfr(x,\bfu)\Bigr)\,d\bfu\,dx\,.
\end{multline*}

This is handled as above, using Lemma \ref{utox} and the result for $m-1$ and $L-1$. The only difference is that when we convert a factor of $\partial_{u_k}$ into $\partial_x$, in addition to acting on the other factors of $V_j$ the operator $\partial_x$ can also act on the $Q_{\bfr,\alpha_i,x}$, which is harmless by \eqref{dominate}.
\end{proof}


\nocite{*}
\bibliographystyle{amsplain}
\bibliography{heat_trace_arx}


\end{document}